\def\N{{\mathbb{N}}}
\def\R{{\mathbb{R}}}
\theoremstyle{plain}
\newtheorem{theorem}{Theorem}
\newtheorem{proposition}{Proposition}
\newtheorem{definition}{Definition}
\newtheorem{lemma}{Lemma} 
\newtheorem{corollary}{Corollary}
\theoremstyle{remark}
\newtheorem{remark}{Remark}
\newtheorem{example}{Example}
\title[Optimal transport]{Monge-Kantorovich's duality for separable Baire measures in completely regular Hausdorff spaces}
\author{Mohammed Bachir}
\begin{document}
	
	\date{\today} 
	\subjclass{ 49Q22, 28C05, 46N10, 49N15 }
	\address{Laboratoire SAMM 4543, Universit\'e Paris 1 Panth\'eon-Sorbonne, France}
	
	\email{Mohammed.Bachir@univ-paris1.fr}
	\begin{abstract}
		We generalize the classical Monge–Kantorovich duality—typically established for tight (Radon) probability measures—to separable Baire probability measures, which are strictly more general than tight measures in general completely regular Hausdorff spaces. Within this broader framework, we also demonstrate the existence of solutions.
	\end{abstract}
	\maketitle
	{\bf Keywords: Completely regular Hausdorff space, Kantorovich duality, Radon and separable Baire measures.}
	
	\tableofcontents
	\section{Introduction}

	Given two completely regular Hausdorff spaces $X$ and $Y$, two Radon probability measures $\mu \in \mathcal{P}_r(X)$, $\nu \in \mathcal{P}_r(Y)$ and a cost function (measurable) $c: X\times Y\to [0,+\infty]$, the Kantorovich primal problem is then given by the following minimization problem:
	\begin{eqnarray*} \label{KP}
		{\bf (KP)} \hspace{5mm}  v_{\min}{\bf (KP)}:=\min\left \{\int_{X\times Y} c d\gamma: \gamma \in \Pi(\mu,\nu)\right \},
	\end{eqnarray*}
	where  $\Pi(\mu,\nu)$ is the so-called transport plans, that is, the set of all Radon probability measures $\gamma$ on $X\times Y$ such that $\gamma$ admits $\mu$ and $\nu$ as marginals on $X$ and $Y$ respectively. Equivalently (see \cite[Corollary 2.2]{Eda}),
	\begin{eqnarray*} \Pi(\mu,\nu):= \biggl\{\gamma \in  \mathcal{P}_r(X\times Y) :&&\int_{X\times Y} \phi\oplus \psi d\gamma =\int_X \phi d\mu +\int_Y \psi d\nu,\\
		&& \forall \phi\in C_b(X), \forall \psi \in C_b(Y) \biggr\},
	\end{eqnarray*}
		where $C_b(X)$ (resp. $C_b(Y)$) denotes the space of all real-valued bounded continuous functions on $X$ (resp. on $Y$).
		
	The  Kantorovich dual problem is given by the following maximization problem:
	\begin{eqnarray*} \label{DP}
		{\bf (DP)}	\hspace{5mm} v_{\max}{\bf (DP)}:=\max \left \{ \int_X \phi d\mu+\int_Y \psi d\nu: \phi\in C_b(X), \psi\in C_b(Y); \phi\oplus \psi\leq c \right \},
	\end{eqnarray*}
 where, $\phi\oplus \psi : X\times Y \to \R$ is defined by $\phi\oplus \psi (x,y)=\phi(x)+\psi(y)$, for all $x\in X, y\in Y$. 
	Monge-Kantorovich's duality for optimal transport originated in Monge's transport problem. For details on the  Monge-Kantorovich duality in Polish spaces, we send back the precious books by F. Santambrogio \cite{Sfilippo} and C. Villani \cite{Villani0, Villani}. For the Monge-Kantorovich duality in  the general case of completely regular spaces, we refer to the significant investigations developed by H. G. Kellerer in \cite{Khg} and D.A. Edwards in \cite{Eda}. Several authors have examined the extension of Monge-Kantorovich's duality on various directions, see for instance the works of W. Schachermayer and J. Teichmann \cite{SwTj}, M. Beiglböck and W. Schachermayer \cite{BSch} and the more recent works in \cite{BPS, DKW} and the references therein.
	
	\paragraph{\bf The aim} The concept of tight (or Radon) measure is fundamental in the study of the existence of solutions to the problem {\bf (KP)} (thanks to Prokhorov’s theorem, see for instance \cite{Sfilippo, Villani}). The objective of this article is to extend, the Monge-Kantorovich's duality from tight measures to the case of separable Baire measures in completely regular Hausdorff spaces. A Baire measure $\mu$ on $X$ is called separable if for every bounded continuous pseudo-metric $d$ on $X$, $\mu$ is concentrated in a subset of $X$ which has a countable set dense for $d$ (see Section \ref{S1}, for more details). In a non-complete separable metric space, Borel measures coincide with separable measures but are not necessarily Radon (see \cite[Example 7.1.6 $\&$ Example 1.12.13]{Bvi}). Also, in the context of general completely regular spaces, Radon measures can be extremely rare compared to Borel measures. For example, on the Sorgenfrey line $([0,1), \tau_\ell)$ (a separable perfectly normal Hausdorf space), the Borel sets coincide with the usual Borel sets and every Borel probability measure is separable. However, the Lebesgue measure is not Radon with respect to the Sorgenfrey topology, because (as is well known), compact subsets of $([0,1), \tau_\ell)$ are at most countable (see for instance, \cite[Example 6.1.19]{Bvi}). The results of this article, will complement the investigations conducted by H. G. Kellerer and D.A. Edwards, among other studies, where only the tight (or Radon) measures were studied. 

	In order to carry out our extension, we are also obliged to extend the definition of the set $\Pi(\mu, \nu)$ and consequently the problem {\bf (KP)}. One of the most important reasons, is that, it remains an open problem whether the product of two Borel measures on topological spaces can be always extended to a Borel measure (see comments in \cite[Chapter 7, p. 127]{Bvi}, just before \cite[Theorem ~7.14.11]{Bvi}). This leads us to replace the set $\Pi(\mu, \nu)$ by the broader set $\Gamma(\mu, \nu)$ (see below for definitions) which always exists even if the measures $\mu$ and $\nu$ are not Radon, whereas there is no guarantee that the set $\Pi(\mu, \nu)$ can be nonempty by replacing ``Radon or tight probability'' with ``Baire or Borel probability'' in general completely regular Hausdorff spaces. It should be noted that, the usual product measure of two Baire measures, say $\mu \otimes \nu$, can always be extended into a non-negative finite, additive and regular set function $\widetilde{\mu \otimes \nu} \in \mathcal{M}^1(X\times Y)$ (via the Alexandroff representation and the Hahn-Banach theorems), thus $\widetilde{\mu \otimes \nu} \in \Gamma (\mu, \nu)$ and so $\Gamma(\mu, \nu)$ is nonempty. Thus, we consider in this article the optimization problem ${\bf (OP)}$ as a generalisation of the problem {\bf (KP)}: Given two Baire probability measures $\mu \in \mathcal{P}_\sigma(X)$, $\nu \in \mathcal{P}_\sigma(Y)$ and a cost function $c : X\times Y \to [0,+\infty]$, we define
	\begin{eqnarray*} \label{OP}
		{\bf (OP)} \hspace{5mm} v_{\min}{\bf (OP)}=\inf\left \{\int_{X\times Y} c d\gamma: \gamma \in \Gamma(\mu,\nu)\right \},
	\end{eqnarray*}
	where, 
	\begin{eqnarray*} \Gamma(\mu,\nu):= \biggl\{\gamma \in  \mathcal{M}^1(X\times Y) :&&\int_{X\times Y} \phi\oplus \psi d\gamma =\int_X \phi d\mu +\int_Y \psi d\nu,\\
		 && \forall \phi\in C_b(X), \forall \psi \in C_b(Y) \biggr\},
	\end{eqnarray*}
	and $\mathcal{M}^1(X\times Y)$ denotes the set of all non-negative, finite, additive and regular set functions (charges) $\gamma$ on the algebra $\mathcal{U}(X\times Y)$ generated by the zero-sets, such that $\gamma(X\times Y)=1$ (for more details, see Section \ref{S1}. For integration theory via linear functionals, representing set functions and measures, we refer to \cite[Chapter 7]{Bvi} and \cite[Chapter 14]{AB}).  
	
	 Our contributions are structured as follows:

	$1.$ We prove existence of solutions to a general optimization problem {\bf (GDP)} (Theorem ~\ref{thm1ND}). As particular case, we extend the existence of solutions to {\bf (DP)} from tight (or Radon) to separable Baire measures (Corollary ~\ref{Dep}).
	
	$2.$ We establish a natural  duality between {\bf (DP)} and {\bf (OP)} under general conditions (Theorem ~\ref{thm2}).
	
	$3.$ When restricted to Radon probability measures $\mu$ and $\nu$, we prove that $\Gamma(\mu, \nu)=\Pi(\mu, \nu)$ and so the problem {\bf (OP)} coincides with the classical {\bf (KP)} (Theorem ~\ref{lemma3}). This brings us back to classical duality as a special case (Corollary ~\ref{thm31}).
	\vskip5mm
\paragraph{\bf A few details about our approach and our main results} Optimal transport and the Monge-Kantorovich duality are highly active areas of research. Classical results are given on Polish spaces (a Polish space is a topological space homeomorphic to a complete separable metric space) and show, under certain conditions on the cost function $c$, that the duality $v_{\max}{\bf (DP)}= v_{\min}{\bf (KP)}$ is true. In addition, with a few more conditions, both problems also have solutions (see \cite[Theorem 1.39]{Sfilippo} and \cite[Theorem 5.10]{Villani}). An elegant proof of Kantorovich's duality in completely regular Hausdorff spaces is provided by D.A. Edwards in \cite{Eda}, where the duality has been established. However, as mentioned by the author at the end of the article, from a linear programming point of view, the discussion has been incomplete in the sense that it does not shed any light on the question of whether the problem {\bf (DP)} has a solution. We also would like to highlight the significant work by H. G. Kellerer in \cite{Khg}, where various conditions for the validity of duality have been proposed. The author also discusses specific conditions that ensure the existence of a solution to the dual problem in $\mathcal{L}^1(\mu)\times\mathcal{L}^1(\nu)$. However, Kellerer's analysis is confined to tight (or Radon) measures, which form a specific subset of the set of all Baire measures on general completely regular Hausdorff spaces. Note that in Polish spaces, the Borel $\sigma$-algebra and the Baire $\sigma$-algebra coincide, and every Borel measure is Radon (see \cite[Theorem 7.1.7]{Bvi}). 
	
    The classical approach given in \cite{Sfilippo} and \cite{Villani} begins
	by showing that the problem ${\bf (KP)}$ has a solution (thanks to the use
	of Prokhorov’s theorem) and then follows a procedure for constructing a solution
	to the problem ${\bf (DP)}$ using the c-cyclic monotonicity of the support of the solution of ${\bf (KP)}$ (see \cite[“Idea of proof of Theorem 5.10.” p. 73-74]{Villani} or the proof of \cite[Theorem 1.39]{Sfilippo}). In the classical approach, the concept of c-cyclic monotonicity is crucial for the Monge-Kantorovich duality. Our approach is different and does not use the concept of $c$-cyclic monotonicity. We first prove the existence of a solution to the
	problem ${\bf (DP)}$ by using the Arzela-Ascoli theorem for pointwize convergence, and
	the Wheeler’s result on the continuity of separable Baire measures (\cite[Proposition 4.1]{Wrf1}, see also Proposition \ref{pW2} in this article). Then, using
	the solution of ${\bf (DP)}$ and the Lagrange multiplier rule obtained recently in \cite{BaBl}, we show that ${\bf (OP)}$ (in particular ${\bf (KP)}$) also has a solution, and moreover the duality holds. In summary, the classical approach starts from the problem {\bf (KP)} to arrive at the problem {\bf (DP)} by using the concept of $c$-cyclic monotonicity, whereas in our approach we start from the problem {\bf (DP)} to arrive at the problem {\bf (OP)} (and in consequence at the problem {\bf (KP)}), by using the Lagrange multiplier rule. Note that we are mainly interested in solutions of {\bf (DP)} which belong to the space $C_b(X)\times C_b(Y)$ rather than the larger space $\mathcal{L}^1(\nu)\times \mathcal{L}^1(\nu)$. 
	
	As a consequence of Theorem ~\ref{thm1ND}, we prove in  Corollary ~\ref{thm21}, that if $X$ and $Y$ are completely regular Hausdorff spaces, then for every separable non-negative Baires measures $\mu$ on $X$ and $\nu$ on $Y$, the dual problem ${\bf (DP)}$ (under certain conditions on the cost $c$) has a solution in $C_b(X)\times C_b(Y)$, the problems ${\bf (OP)}$ has also a solution and moreover the duality $v_{\max}{\bf (DP)}=v_{\min}{\bf (OP)}$ holds. In Theorem ~\ref{lemma3} we show that when $X$ and $Y$ are perfectly normal Hausdorff spaces (in particular Polish spaces) and $\mu$, $\nu$ are Radon probability measures on $X$ and $Y$ respectively, then problems ${\bf (KP)}$ and ${\bf (OP)}$ coincide. In this way, we recover in Corollary ~\ref{thm31} the classical Kantorovich's duality. As proved in Theorem \ref{thm2}, the ${\bf (OP)}$ problem naturally appears in a duality with ${\bf (DP)}$ problem.
	
	\vskip5mm
	This article is organized as follows: In Section ~\ref{S1}, we begin by recalling the various known classes of Baire measures and provide comparisons among them. Next, we recall the three problems to be dealt with in a little more detail. Section ~\ref{S2} concerns the existence of solutions for a general optimization problem {\bf (GDP)} (Theorem ~\ref{thm1ND}). As particular case, we extend the existence of solutions to {\bf (DP)} from tight to separable Baire measures (Corollary ~\ref{Dep}). In Section ~\ref{S3} we establish duality results between {\bf (DP)} and {\bf (OP)} problems (Theorem \ref{thm2} and Corollary \ref{thm21}). In Section ~\ref{S4}, we deduce the classical Kantorovich duality as a special case. 
	
	\section{A few reminders and definitions} \label{S1}
	
	To enhance the readability of this paper, we believe it is beneficial to revisit some fundamental definitions and concepts, along with some well-established facts.
	
	Let $X$ be a completely regular Hausdorff space. We denote $C_b(X)$ the Banach space of all real-valued bounded continuous functions and $C_b(X)^*$ its topological dual. The closed convex positive cone is denoted $C_b(X)^+$ and $(C_b(X)^+)^*$ is its positive dual cone.
	
	\subsection{Different notions of measures.} Let $\mathcal{U}(X)$ be the algebra generated by all zero-sets in $X$ (a zero-set in $X$ is a set of the form $f^{-1}(0)$, where $f$ is a real-valued continuous function),  $\mathcal{B}_a(X)$ the smallest $\sigma$-algebra of subsets of $X$ generated by the zero-sets in $X$ and $\mathcal{B}(X)$ be the smallest $\sigma$-algebra generated by the open sets (or equivalently by all closed sets). Elements of $\mathcal{B}(X)$ are called Borel sets and elements of $\mathcal{B}_a(X)$ are called Baire sets. Clearly, $\mathcal{U}(X)\subseteq \mathcal{B}_a(X)\subseteq \mathcal{B}(X)$. It is well-known and easy to see that $\mathcal{B}(X)=\mathcal{B}_a(X)$, if $X$ is a perfectly normal Hausdorff space, in particular if $X$ is a metric space (see \cite[Proposition 6.3.4 \& Corollary 6.3.5]{Bvi}). 
	
	{\bf 1. (Baire and Borel measure)} A $\sigma$-additive (or countably additive) measure on the Baire (resp. the Borel) $\sigma$-algebra $\mathcal{B}_a(X)$ (resp. $\mathcal{B}(X)$) is called a Baire (resp. a Borel) measure on $X$. 
	
	{\bf 2. (Radon measure)} A Borel measure $\mu$ on $X$ is called a Radon measure if for every $B$ in
	$\mathcal{B}(X)$ and $\varepsilon >0$, there exists a compact set $K_\varepsilon \subset B$ such that $|\mu|(B\setminus K_\varepsilon) < \varepsilon$, where $|\mu|$ denotes the total variation of $\mu$.
	
	{\bf 3. (Tightness)} A non-negative set function $\mu$ defined on some system
	$\mathcal{A}$ of subsets of a topological space X (e.g., $\mathcal{A}=\mathcal{U}(X)$, $\mathcal{B}_a(X)$ or $\mathcal{B}(X)$) is called tight on $\mathcal{A}$ if for every $\varepsilon>0$,
	there exists a compact set $K_\varepsilon$ in $X$ such that $\mu(A) < \varepsilon$ for every element A
	in $\mathcal{A}$ that does not meet $K_\varepsilon$. 
	
	An additive set function $\mu$ of bounded variation on an algebra  is called tight if its total variation $|\mu|$ is tight.
	
	{\bf 4. (Regularity)} A non-negative set function $\mu$ defined on some system
	$\mathcal{A}$ of subsets of a topological space $X$ is called (inner) regular if for every $A$ in $\mathcal{A}$  and
	every $\varepsilon >0$, there exists a closed set $F_\varepsilon$ such that $F_\varepsilon\subset A$, $A\setminus F_\varepsilon \in \mathcal{A}$ and $\mu(A\setminus F_\varepsilon) < \varepsilon$. 
	
	An additive set function $\mu$ of bounded variation on an algebra is called regular if its total variation $|\mu|$  is regular.
	
	Every Radon measure on a Hausdorff space is regular and
	tight. Conversely, if a Borel measure is regular and tight, then it is Radon,
	since the intersection of a compact set and a closed set is compact. However,
	a regular Borel measure may fail to be tight (\cite[Example 7.1.6]{Bvi}). Every Baire measure is regular . Moreover, for every Baire set $E$ and every $\varepsilon >0$, there exists a continuous function $f$ on $X$ such that $f^{-1}(0)\subseteq  E$ and $|\mu |(E\setminus f^{-1}(0)) < \varepsilon$ (\cite[Corollary 7.1.8]{Bvi}).
	
	{\bf 5. (The space $\mathcal{M}(X)$ of charges)} By $\mathcal{M}(X)$ we denote the space of all additive (inner) regular set functions $m:\mathcal{U}(X) \to \R$, that is, the space of set functions that are: $(i)$ additive, $(ii)$ uniformly bounded, and $(iii)$ for any $A\in \mathcal{U}(X)$, any $\varepsilon >0$ there exists a zero-set $F$ such that $F\subseteq A$ and $|m(B)|< \varepsilon$ for all $B\subseteq A\setminus F$, $B\in \mathcal{U}(X)$.
	
	{\bf 6. (The space $\mathcal{M}_\sigma(X)$ of Baires measures)} We denote $\mathcal{M}_{\sigma}(X)$ the set of all  Baire measures, that is, the set of all finite, real-valued, regular and $\sigma$-additive measures on $\mathcal{B}_a(X)$. The set $\mathcal{M}_{\sigma}(X)$ is naturally a subspace of $\mathcal{M}(X)$.
	
	{\bf 7. (The space $\mathcal{M}_\tau(X)$ of $\tau$-additive Baires measures)} A Baire measure $\mu \in \mathcal{M}_\sigma(X)$ is said to be $\tau$-additive if, $|\mu|(Z_\alpha) \to 0$ for every net of zero sets $Z_\alpha$ decreasing to the null set.
	
	The space of all $\tau$-additive Baire measures will be denoted $\mathcal{M}_{\tau}(X)$.

	{\bf 8. (The space $\mathcal{M}_s(X)$ of separable Baire measures)} A Baire measure $\mu$ on $X$ is called separable if for every bounded continuous pseudo-metric $d$ on $X$, $\mu$ is concentrated in a subset of $X$ which has a countable set dense for $d$. This notion was introduced by R. M. Dudley in \cite{Drm} and studied by R. F. Wheeler \cite{Wrf,Wrf1} (see also the references therein). The set of all separable signed measure of $\mathcal{M}_{\sigma}(X)$ will be denoted $\mathcal{M}_s(X)$. Notice that in \cite{Wrf1}, the space $\mathcal{M}_s(X)$ is defined as the dual of $C_b(X)$ when equipped with an appropriate strict topology $\beta_e$, but it is shown in \cite[Proposition 4.1]{Wrf1}  that the topological dual of $(C_b(X),\beta_e)$ (denoted also $\mathcal{M}_s(X)$) coincides with the separable measure space introduced by M. Dudley.
	
	{\bf 9. (The space $\mathcal{M}_t(X)$ of tight Baire measures)} The subspace of $\mathcal{M}_\sigma(X)$ consisting on all tight Baire measures will be denoted by $\mathcal{M}_t(X)$. 
	
	We always have the following relationships:
	$$\mathcal{M}_t(X) \subseteq \mathcal{M}_\tau(X) \subseteq \mathcal{M}_s(X)\subseteq \mathcal{M}_\sigma(X)\subseteq \mathcal{M}(X).$$ 
	%(see for instance \cite{Wrf} and \cite{Sd}). 
	The symbols $\mathcal{M}_t^+(X)$, $\mathcal{M}_\tau^+(X)$, $\mathcal{M}_s^+(X)$, $\mathcal{M}_\sigma^+(X)$ (resp. $\mathcal{P}_t(X)$, $\mathcal{P}_\tau(X)$, $\mathcal{P}_s(X)$, $\mathcal{P}_\sigma(X)$) stand, respectively, for the corresponding classes of non-negative measures (resp. probability measures). The set $\mathcal{M}^+(X)\subseteq \mathcal{M}(X)$ denotes the set of all non-negative additive regular set function and $\mathcal{M}^1(X)$ the subset of $\gamma \in \mathcal{M}^+(X)$ such that $\gamma(X)=1$.
	
	In general, these inclusions are strict. However, if $X$ is paracompact, in particular if $X$ is metrizable, then $\mathcal{M}_\tau(X)= \mathcal{M}_s(X)$ (see \cite[Proposition 3.7 \& Proposition 3.8]{Wrf1}). If $X$ is a complete metric space, then $\mathcal{M}_t(X)= \mathcal{M}_s(X)$ (see \cite[Corollary p. 257]{Drm}). If $X$ is a separable metric space, $\mathcal{M}_\tau(X)= \mathcal{M}_\sigma(X)$ \cite[Proposition 7.2.2]{Bvi}. We have $\mathcal{M}_s(X)= \mathcal{M}_\sigma(X)=\mathcal{M}(X)$ if and only if $X$ is pseudocompact (see \cite[Theorem 2.3 \& Theorem 2.6]{Wrf}). 
	
	Anyway, for a Polish space $X$ we have, by the preceding comments, that $\mathcal{B}_a(X)=\mathcal{B}(X)$ and $\mathcal{M}_t(X)=\mathcal{M}_\tau(X)=\mathcal{M}_s(X)=\mathcal{M}_\sigma(X)$. However, we can have $\mathcal{M}_t(X)\subsetneq \mathcal{M}_s(X)=\mathcal{M}_\sigma(X)$ even in a non-complete separable metric space. Indeed, there exists a non-Lebesgue measurable subset $M$ of $[0,1]$ and a regular Borel (separable) probability measure $\mu $ on the non-complete separable metric space  $(M,|\cdot|)$, which is not Radon (see \cite[Example 7.1.6 $\&$ Example 1.12.13]{Bvi}). For more details, we refer to \cite{AB, Bvi, Drm, Va, Wrf1, Wrf}. 
	
	\subsection{Monge-Kantorovich problems} 
	
	Let $Z$ be a completely regular Hausdorff space. By the Alexandroff representation theorem \cite[Theorem 7.9.1]{Bvi}, for any bounded positive linear functional $L$ on $C_b(Z)$ ($L\in (C_b(Z)^+)^*$) there exists a non-negative additive regular
	set function $\gamma$ on $\mathcal{U}(Z)$ with $\gamma(Z)=\|L\|$ such that $L (f)=\int_Z f d\gamma$ for all $f\in C_b(Z)$. The set $\mathcal{M}^1(Z)$ of all non-negative additive regular set function $\gamma$ such that $\gamma(Z)=1$, can be identified with the convex weak$^*$-compact subset of $(C_b(Z)^+)^*$ (Banach-Alaoglu theorem's) defined by
	$$D^1(Z):=\overline{\textnormal{conv}}^{w^*}\left\{\delta_{z}: z\in Z \right\},$$
	where, for each $z\in Z$, $\delta_{z}: C_b(Z)\to \R$ denotes the Dirac evaluation at $z$,  $\delta_{z}(f)=f(z)$ for all $f\in C_b(Z)$. By definition, a net $(\gamma_\alpha)_\alpha$ in $\mathcal{M}^1(Z)$, weak-converges to $\gamma$ if and only if $\int_Z f d\gamma_\alpha \to \int_Z f d\gamma$, for every $f\in C_b(Z)$. Notice that the terminology of weak-convergence of additive set function or measures coincides with the weak$^*$-convergence of the associated linear continous functionals in $C_b(Z)^*$. As an immediate consequence of Banach-Alaoglu theorem's (up to the Alexandroff representation theorem) we have the following proposition.
	
	\begin{proposition} \label{propweak} 
		Let $Z$ be a completely regular Hausdorff space. Then, $(\mathcal{M}^1(Z),w)$ is convex and  weak-compact subset of $(\mathcal{M}(Z), w)$.
	\end{proposition}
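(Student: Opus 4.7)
The plan is to transport the problem from set functions to the dual space $C_b(Z)^*$ via the Alexandroff representation theorem and then apply Banach--Alaoglu. More precisely, I would proceed as follows.

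First I would make the identification explicit. By the Alexandroff representation theorem cited in the excerpt, the map $\gamma \mapsto L_\gamma$, with $L_\gamma(f) := \int_Z f\, d\gamma$, is a bijection between $\mathcal{M}^+(Z)$ and the positive cone $(C_b(Z)^+)^*$, and it sends $\mathcal{M}^1(Z)$ onto the set
\begin{equation*}
   E := \bigl\{L \in C_b(Z)^* : L \geq 0 \text{ on } C_b(Z)^+ \text{ and } L(1_Z) = 1\bigr\}.
\end{equation*}
By the definition of the weak topology on $\mathcal{M}(Z)$ given just before the statement, this bijection is a homeomorphism when $E$ is equipped with the relative weak$^*$-topology of $C_b(Z)^*$. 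It therefore suffices to prove that $E$ is convex and weak$^*$-compact in $C_b(Z)^*$.

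Next I would verify the two properties for $E$. Convexity is immediate since both defining conditions (positivity and $L(1_Z)=1$) are preserved by convex combinations; this also transfers back to show that $\mathcal{M}^1(Z)$ is convex. For weak$^*$-compactness, I would first observe that any $L\in E$ satisfies $\|L\| = L(1_Z) = 1$ (for a positive functional on a space containing the constants, the norm is attained at the constant function $1_Z$), so $E$ is contained in the closed unit ball $B^*$ of $C_b(Z)^*$, which is weak$^*$-compact by the Banach--Alaoglu theorem. It then remains to show that $E$ is weak$^*$-closed. But $E$ is the intersection of the hyperplane $\{L : L(1_Z) = 1\}$ with the collection of half-spaces $\{L : L(f) \geq 0\}$ for $f \in C_b(Z)^+$, each of which is weak$^*$-closed as the preimage of a closed set under the evaluation map $L \mapsto L(f)$ (continuous by definition of the weak$^*$-topology). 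Hence $E$ is weak$^*$-closed in $B^*$, and so weak$^*$-compact.

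I do not anticipate a real obstacle: the argument is essentially a bookkeeping exercise once the Alexandroff identification is in place, and the description $D^1(Z) = \overline{\mathrm{conv}}^{w^*}\{\delta_z : z \in Z\}$ already furnishes the needed compact convex representation. The only subtlety worth double-checking is that the weak topology on $\mathcal{M}(Z)$ introduced in the excerpt (defined via duality against $C_b(Z)$) indeed corresponds under the Alexandroff bijection to the weak$^*$-topology on $C_b(Z)^*$; this is however exactly the content of the remark preceding the statement, so no extra work is needed.
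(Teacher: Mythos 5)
Your proposal is correct and follows essentially the same route as the paper, which derives the proposition as an immediate consequence of the Alexandroff representation theorem together with the Banach--Alaoglu theorem; you merely fill in the details by describing the image of $\mathcal{M}^1(Z)$ as the set of positive normalized functionals (a weak$^*$-closed convex subset of the unit ball) rather than as $D^1(Z)=\overline{\mathrm{conv}}^{w^*}\{\delta_z : z\in Z\}$, and these two descriptions coincide.
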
 
	
	We denote $lsc_{bi}(Z)$, the set of all proper bounded from below lower semicontinuous functions $g:Z\to \R\cup \{+\infty\}$. In a normal Hausdorff space $Z$, each proper bounded from below lower semicontinuous function $g$ satisfies $g=\sup\{f: f\in C_b(Z), f\leq g\}$ (see \cite[Theorem 2, Corollary 1 \& Example 2]{Ba1}, see also \cite{Th}).  Every positive linear continuous functional $L\in (C_b(Z)^+)^*$ can be extended to a lower semicontinuous convex function from $lsc_{bi}(Z)$ to $\R\cup\{+\infty \}$ as follows: $$ L_*(g):=\sup\{L(f): f\in C_b(Z): f\leq g\}, \hspace{1mm} \forall g\in lsc_{bi}(Z).$$
	We will use the notation $L(g)$, to designate  $L_*(g)$ for every $g \in lsc_{bi}(Z)$.  In this way, for every $\gamma \in \mathcal{M}^1(Z)$ and every $g\in lsc_{bi}(Z)$, let us define the lower integral of $g$ by $$I_\gamma(g):=\sup \left \{\int_Z f d\gamma: f\in C_b(Z): f\leq g \right \}.$$
As stated in the following known proposition, if $\gamma \in \mathcal{M}^+_\sigma(Z)$ is a positive $\sigma$-additive measure then the lower integral of $g\in lsc_{bi}(Z)$ coincides with the usual integrale of $g$. For this reason, we will use the notation $\int_Z g d\mu$ instead of $I_\gamma(g)$, as soon as $\gamma \in \mathcal{M}^1(Z)$ and $g\in lsc_{bi}(Z)$.
	\begin{proposition} Let $Z$ be a perfectly normal Hausdorff space and $\gamma \in \mathcal{M}^+_\sigma(Z)$. Then, $I_\gamma(g)=\int_Z g d\gamma$, for every $g\in lsc_{bi}(Z)$.
		
	\end{proposition}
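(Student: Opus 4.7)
The plan is to prove the two inequalities $I_\gamma(g)\leq\int_Z g\,d\gamma$ and $\int_Z g\,d\gamma\leq I_\gamma(g)$ separately. Since $Z$ is perfectly normal, $\mathcal{B}_a(Z)=\mathcal{B}(Z)$ by the fact recalled in Section~\ref{S1}, so every $g\in lsc_{bi}(Z)$ is Baire-measurable and $\int_Z g\,d\gamma\in\R\cup\{+\infty\}$ is well-defined via the $\sigma$-additivity of $\gamma$. The first inequality is immediate: for every $f\in C_b(Z)$ with $f\leq g$, monotonicity of the integral gives $\int_Z f\,d\gamma\leq\int_Z g\,d\gamma$, and taking the supremum on the left yields $I_\gamma(g)\leq\int_Z g\,d\gamma$.

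For the reverse inequality, my plan is to exhibit a pointwise-increasing sequence $(f_n)\subset C_b(Z)$ with $f_n\leq g$ and $f_n\uparrow g$, and then invoke the monotone convergence theorem (available because $\gamma$ is $\sigma$-additive) to conclude $\int_Z g\,d\gamma=\lim_n\int_Z f_n\,d\gamma\leq I_\gamma(g)$. The directed representation $g=\sup\{f\in C_b(Z):f\leq g\}$ already holds in normal spaces, but in general its supremum is uncountable, while $\sigma$-additivity only commutes with countable monotone limits. Perfect normality enters precisely at this point, since it characterizes the topologies in which every closed set is a zero set, equivalently every open set is a cozero set.

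To construct the sequence concretely, I would first reduce to the case of a bounded non-negative lsc function $h$ by treating $h_k=(g\wedge k)-\inf g$ and diagonalizing in $k$. For each rational $r>0$, perfect normality provides a continuous $\varphi_r:Z\to[0,1]$ with $\{\varphi_r>0\}=\{h>r\}$, so the continuous functions $\min(m\varphi_r,1)$ increase pointwise to $\mathbf{1}_{\{h>r\}}$ as $m\to\infty$. Combining this with the elementary pointwise identity $h=\sup_{r\in\Q\cap[0,\infty)} r\cdot\mathbf{1}_{\{h>r\}}$ and taking successive finite maxima of the countably many continuous minorants $r\cdot\min(m\varphi_r,1)$ produces the required increasing sequence $(f_n)\subset C_b(Z)$ with $f_n\leq h$ and $f_n\uparrow h$ pointwise.

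The main obstacle is precisely this extraction of a countable monotone approximation. On a merely normal Hausdorff space the argument fails: one would need $\tau$-additivity of $\gamma$ in order to commute an uncountable directed supremum with the integral. Perfect normality bypasses that requirement by converting the directed supremum of continuous minorants into a countable increasing one, and this is exactly what drives the proof.
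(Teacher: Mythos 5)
Your proof is correct and follows essentially the same route as the paper: both reduce to producing a pointwise increasing sequence of bounded continuous minorants $f_n\uparrow g$ and then apply the monotone convergence theorem for the nontrivial inequality $\int_Z g\,d\gamma\leq I_\gamma(g)$, the easy inequality being immediate from monotonicity. The only difference is that the paper simply cites Tong's theorem \cite{Th} for the existence of such a sequence on a perfectly normal space, whereas you reprove that approximation fact directly from the cozero-set characterization of perfect normality; your construction is sound, so this just makes the argument self-contained.
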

	\begin{proof} Let $g\in lsc_{bi}(Z)$. We can assume without loss of generality that $g\geq 0$. Since $Z$ is perfectly normal Hausdorff space, $g$ is the limit of a point-wise increasing sequence of continuous functions (see \cite{Th}). So, there exists a sequence $(f_n)$ of bounded continuous functions with
		$$0\leq f_0(x)\leq f_1(x)\leq ... \leq f_n(x)\leq ...\leq g(x) \textnormal{ and } g(x)=\lim_{n\to +\infty} f_n(x), \hspace{1mm} \forall x\in Z.$$
	By the monotone convergence theorem, $\lim_{n\to +\infty} \int_Z f_n d\gamma=\int_Z \lim_{n\to +\infty} f_n d\gamma=\int_Z g d\gamma$. On the other hand, for all $n\in \N$, $\int_Z f_n d\gamma \leq I_\gamma(g)$. It follows that $\int_Z g d\gamma \leq I_\gamma(g)$. Now, we have $\int_Z f d\gamma \leq \int_Z g d\gamma$, for every $f\leq g$, $f\in C_b(Z)$. Thus, $I_\gamma(g)\leq \int_Z g d\gamma$. 
	\end{proof}
We recall the three  optimisation problems that we are going to treat.
	
	\vskip5mm 
	
	\paragraph{\bf A. The problem {\bf (OP)}}  	Let $X$ and $Y$  be two completely regular Hausdorff spaces, $\mu \in \mathcal{P}_\sigma(X)$ and $\nu \in \mathcal{P}_\sigma(Y)$. We introduce the optimization problem {\bf (OP)} as follows:
	\begin{eqnarray*} \label{OP}
		{\bf (OP)} \hspace{5mm} v_{\min}{\bf (OP)}=\inf\left \{\int_{X\times Y} c d\gamma: \gamma \in \Gamma(\mu,\nu)\right \},
	\end{eqnarray*}
	where, 
	\begin{eqnarray*} \Gamma(\mu,\nu):= \biggl\{\gamma \in  \mathcal{M}^1(X\times Y):&& \int_{X\times Y} \phi\oplus \psi d\gamma =\int_X \phi d\mu +\int_Y \psi d\nu; \\
	&&	\phi\in C_b(X), \psi \in C_b(Y)\biggr\}.
	\end{eqnarray*}
	Notice that the usual product measure $\mu\otimes \nu$ defined on the product $\sigma$-algebra $\mathcal{B}_a(X)\otimes \mathcal{B}_a(Y)$ can be extended to an element $\gamma \in \mathcal{M}^1(X\times Y)$. Indeed, the linear continuous functional $L(f\oplus g):=\int_X fd\mu+ \int_Y g d\nu$, defined on the subspace $C_b(X)\oplus C_b(Y)$ of $C_b(X\times Y)$ can be extended to a continuous linear functional $\widetilde{L}$ on $C_b(X\times Y)$ with $\widetilde{L}(1_{X\times Y})=1$, thanks to the Hanh-Banach theorem. By the Alexandroff representation theorem \cite[Theorem 7.9.1]{Bvi}, the functional $\widetilde{L}$ can be represented by an element $\gamma \in \mathcal{M}^1(X\times Y)$. Thus, $\Gamma(\mu, \nu)$ is always nonempty. Moreover, $\Gamma(\mu,\nu)$ is convexe and weak-closed in $\mathcal{M}^1(X\times Y)$, so it is convex weak-compact set (see Proposition \ref{propweak}).
	\begin{remark}
		Suppose that $X$ and $Y$ are completely regular Hausdorff pseudocompact spaces such that $X\times Y$ is pseudocompact (see \cite{St}). It is known that in this case, the space $\mathcal{M}(X\times Y)$  of  finite, finitely-additive, zero-set regular set functions on $X\times Y$ coincides with the space  $\mathcal{M}_s(X\times Y)$ of separable Baire measures on $X\times Y$ (see \cite[Theorem 2.3 \& Theorem 2.6]{Wrf}). In particular, we have $\mathcal{M}^1(X\times Y)=\mathcal{P}_s(X\times Y)$ and so every $\gamma \in \Gamma(\mu,\nu)$ is a separable Baire probability measure on $X\times Y$, whenever $\mu\in \mathcal{P}_s(X)=\mathcal{P}_\sigma(X)$ and $\nu \in \mathcal{P}_s(Y)=\mathcal{P}_\sigma(Y)$.
	\end{remark}
	\vskip5mm
	\paragraph{\bf B. The Kantorovich primal problem}  Given a Borel measure $\mu$ on $X$ and measurable map $f: X\to Z$ (a topological space), the pushforward measure of $\mu$ by $f$ is denoted $f_{\#} \mu$ and defined by 
	$$\forall B\in \mathcal{B}(Z)=\mathcal{B}(Z): f_{\#} \mu(B):=\mu(f^{-1}(B)).$$
	
	We denote $\mathcal{P}_r(X)$ the set of all Radon probability measures on $X$. 	Let $X$ and $Y$  be two completely regular Hausdorff spaces, $\mu \in \mathcal{P}_r(X)$ and $\nu \in \mathcal{P}_r(Y)$ be two Radon probability measures. A Radon probability measure defined on $\mathcal{B}(X\times Y)$, say $\gamma \in \mathcal{P}_r(X\times Y)$, is called a transport plan between $\mu$ and $\nu$ if it satisfies both $(\pi_X)_{\#} \gamma=\mu$ and $(\pi_Y)_{\#} \gamma=\nu$, where $\pi_X(x,y)=x$ and $\pi_Y(x,y)=y$ are the two projections of $X\times Y$ onto $X$ and $Y$, respectively. The set of all transport plans, is
	$$\Pi(\mu,\nu):=\{\gamma \in \mathcal{P}_r (X\times Y): (\pi_X)_{\#} \gamma=\mu, (\pi_Y)_{\#} \gamma=\nu\}.$$
	The classical Kantorovich primal problem is then given by the following minimization problem:
	\begin{eqnarray*} \label{KP}
		{\bf (KP)} \hspace{5mm} v_{\min}{\bf (KP)}:=\inf\left \{\int_{X\times Y} c d\gamma: \gamma \in \Pi(\mu,\nu)\right \}.
	\end{eqnarray*} 
	
	\vskip5mm
	\paragraph{\bf C. The  Kantorovich dual problem {\bf (DP)}} Let $X$ and $Y$  be two completely regular Hausdorff spaces, $\mu \in \mathcal{M}^+_\sigma(X)$ and $\nu \in \mathcal{M}^+_\sigma(Y)$. The  Kantorovich dual problem is given by the following maximization problem:
	\begin{eqnarray*} \label{DP}
		{\bf (DP)}	\hspace{5mm} v_{\max}{\bf (DP)}:=\sup \left \{ \int_X \phi d\mu+\int_Y \psi d\nu: \phi\in C_b(X), \psi\in C_b(Y); \phi\oplus \psi\leq c \right \},
	\end{eqnarray*}
	Notice that the inequality $v_{\max}{\bf (DP)}\leq v_{\min}{\bf (OP)}$ is always true for every $\mu \in \mathcal{P}_\sigma(X)$ and $\nu \in \mathcal{P}_\sigma(Y)$ and
	$$v_{\max}{\bf (DP)}\leq v_{\min}{\bf (OP)} \leq v_{\min}{\bf (KP)},$$ 
	is true, whenever $\mu \in \mathcal{P}_r(X)$ and $\nu \in \mathcal{P}_r(Y)$. 
	
	%The condition $\int c d(\mu\otimes \nu) <+\infty,$ ensures that $v_{\max}{\bf (DP)}, v_{\min}{\bf (OP)}$ and $v_{\min}{\bf (KP)}$ are finite.
	
	\section{Existence of solutions for {\bf (DP)}}\label{S2}
	
		In this section, we will discuss the existence of solutions for problem  ${\bf(DP)}$ under certain conditions on $c$. To do this, we need to introduce a few definitions, notions and intermediate results. Let $c: X\times Y\to \R \cup\{+\infty \}$ be a proper function. 
	For each real-valued function $\xi: X\to \R$, we define the $c$-transform $\xi^c: Y\to [-\infty,+\infty)$ by $$ \xi^c(y):=\inf_{x\in X}\{c(x,y)-\xi(x)\}, \hspace{1mm} \forall y\in Y.$$ 
	Similarily, for each $\zeta: Y \to \R$, we define the $\bar c$-transform $\zeta^{\bar c}: X\to [-\infty, +\infty)$ by 
	$$\zeta^{\bar c}(x):=\inf_{y\in Y}\{c(x,y)-\zeta(y)\}, \hspace{1mm} \forall x\in X.$$
	Notice that $ \xi^c$ and $\zeta^{\bar c}$ are bounded whenever $c$, $\xi$ and $\zeta$ are bounded. Set, 
	$$ \underline{d}_c(y,y'):=\sup_{x\in X}|c(x,y)-c(x,y')|,\hspace{1mm} \forall y, y'\in Y,$$
	$$ \overline{d}_c(x,x'):=\sup_{y\in Y}|c(x,y)-c(x',y)|,\hspace{1mm} \forall x, x'\in X.$$
	\begin{definition}
		We say that $c$ has the property $(H)$ if the following assertion holds: For each $x_0\in X$ and $y_0\in Y$: $\lim_{x\to x_0} \overline{d}_c(x,x_0)=0$ and $\lim_{y\to y_0} \underline{d}_c(y,y_0)=0.$ 
	\end{definition}
	
	The property $(H)$ means that $c$ is equicontinuous with respect to each variable, that is, the family $\{c(\cdot,y): y\in Y\}\subseteq C_b(X)$ and $\{c(x,\cdot): x\in X\}\subseteq C_b(Y)$ are equicontinuous at each point of $X$ and $Y$ respectively. In particular, $c$ is jointly continuous at each point. Every real-valued uniformly continuous function on uniform space $X\times Y$, satisfies the property $(H)$. However, a continuous function can possess the property $(H)$ without being uniformly continuous, for example if $c(x,y)=a(x)+b(y)+\ell(x,y)$ for all $x\in X$, $y\in Y$, where $a\in C_b(X)$, $b\in C_b(Y)$ are continuous not uniformly continuous, and $\ell\in C_b(X\times Y)$ is uniformly continuous. We can see from \cite[Theorem 2.2]{Fz} that if $X$ and $Y$ are completely regular infinite spaces, then every bounded continuous function $c$ has the property $(H)$ if and only if $X\times Y$ is a pseudocompact space.

	The following proposition is easy to establish. It is therefore left without proof.
	\begin{proposition} \label{Lip} Assume that $c: X\times Y\to \R$ is a bounded function. Then, for each bounded function $\phi: X\to \R$, we have
		\begin{eqnarray} \label{eq1} \forall y,y'\in Y; |\phi^c(y)-\phi^c(y')|\leq \underline{d}_c(y,y'),
		\end{eqnarray} 
		\begin{eqnarray} \label{eq2}
			\forall x,x'\in X; |\phi^{c\bar c} (x)-\phi^{c\bar c}(x')|\leq \overline{d}_c(x,x'),
		\end{eqnarray} 
		where $\phi^{c\bar c}:=(\phi^{c})^{\bar c}$. In other words, $\phi^{c\bar c}$ and  $\phi^c$ are $1$-Lipschitz functions with respect to the pseudometrics $\overline{d}_c$ and $\underline{d}_c$ on $X$ and $Y$ respectively.
	\end{proposition}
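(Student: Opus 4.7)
The plan is to prove both inequalities by the same infimum-argument: each $c$-transform inherits Lipschitz regularity from the pseudometric encoding the modulus of oscillation of $c$ in the corresponding variable. For (\ref{eq1}), I would fix $y, y' \in Y$ and observe that for every $x \in X$,
\[
c(x,y) - \phi(x) \leq c(x,y') - \phi(x) + |c(x,y) - c(x,y')| \leq c(x,y') - \phi(x) + \underline{d}_c(y,y').
\]
Taking the infimum over $x \in X$ on both sides yields $\phi^c(y) \leq \phi^c(y') + \underline{d}_c(y,y')$. The boundedness of $c$ and $\phi$ ensures that these infima are finite real numbers, so the subtraction is legitimate, and swapping the roles of $y$ and $y'$ closes the estimate.

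For (\ref{eq2}), I would simply repeat the same argument one level up, using $\phi^c$ in place of $\phi$ and the $\bar c$-transform in place of the $c$-transform. Concretely, fix $x, x' \in X$; then for every $y \in Y$,
\[
c(x,y) - \phi^c(y) \leq c(x',y) - \phi^c(y) + \overline{d}_c(x,x'),
\]
and taking the infimum over $y \in Y$ gives $\phi^{c\bar c}(x) \leq \phi^{c\bar c}(x') + \overline{d}_c(x,x')$; a symmetric exchange completes the proof. Note that the first step already produced a bounded $\phi^c$ (its bound depends only on those of $c$ and $\phi$), which is exactly what is needed to guarantee that $\phi^{c\bar c}$ is real-valued and that the second inf-estimate is meaningful.

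There is no genuine obstacle here: the argument is a two-fold application of the elementary principle that the infimum of a uniformly Lipschitz family of functions is itself Lipschitz with the same constant. This is precisely why the authors felt comfortable omitting the proof. The only point requiring any attention is bookkeeping on finiteness, which is immediate from the standing boundedness hypothesis.
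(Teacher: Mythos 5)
Your argument is correct and is exactly the standard infimum-of-a-uniformly-Lipschitz-family estimate that the paper had in mind when it declared the proposition "easy to establish" and omitted the proof. The finiteness bookkeeping (boundedness of $\phi^c$ from that of $c$ and $\phi$) is the only point worth recording, and you handle it.
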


	\begin{remark} If we assume that $(X,d_X)$ and $(Y,d_Y)$ are metric spaces and $c$ is a Lipschitz function with respect to the metric $d_{X\times Y}((x,x'),(y,y')):=d_X(x,x')+d_Y(y,y')$, then $\phi^{c\bar c}$ and  $\phi^c$ are Lipschitz functions on $X$ and $Y$ respectively. 
		
	\end{remark}

	The topology on $C_b(X)$ of pointwise convergence on $X$ will be denoted $\mathcal{T}_p$, (or $\mathcal{T}_p(X)$ if confusion might arise). Let $\mathcal{E}(X)$  denote the family of uniformly bounded subsets of $C_b(X)$ which are equicontinuous.
	
	\begin{proposition} \label{phi}  Assume that $c: X\times Y\to \R$ is a bounded function with the property $(H)$ and set  
		$$A:=\{\phi^{c\bar c} +\inf_{Y}\phi^c: \phi\in C_b(X)\}\subseteq C_b(X),$$
		$$B:=\{\phi^c -\inf_{Y}\phi^c: \phi\in C_b(X)\}\subseteq C_b(Y).$$
		Then, $\overline{A}^{\mathcal{T}_p} \in \mathcal{E}(X)$ and $\overline{B}^{\mathcal{T}_p} \in \mathcal{E}(Y)$. In particular, $\overline{A}^{\mathcal{T}_p}$ and $\overline{B}^{\mathcal{T}_p} $ are compact sets in $(C_b(X),\mathcal{T}_p)$ and $(C_b(Y), \mathcal{T}_p)$ respectively and in consequence, $\overline{A}^{\mathcal{T}_p}\times \overline{B}^{\mathcal{T}_p}$ is a compact set in the product topological space $(C_b(X),\mathcal{T}_p)\times (C_b(Y),\mathcal{T}_p)$.
		
	\end{proposition}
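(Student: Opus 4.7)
The plan is to show four things in sequence: uniform boundedness of $A$ and $B$, equicontinuity of $A$ and $B$, preservation of these two properties under pointwise closure, and finally compactness via an Arzelà-Ascoli type argument.

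First I would set $M:=\sup_{X\times Y}|c|<+\infty$. For the set $B$, any element $\phi^c-\inf_Y\phi^c$ is non-negative, and by Proposition \ref{Lip} it is $1$-Lipschitz with respect to $\underline{d}_c$. Since $\underline{d}_c(y,y')\leq 2M$ uniformly in $y,y'\in Y$, each element of $B$ takes values in $[0,2M]$, so $B$ is uniformly bounded. For the set $A$, the key trick is to rewrite
\[
\phi^{c\bar c}(x)+\inf_{Y}\phi^c=\inf_{y\in Y}\bigl\{c(x,y)-\bigl(\phi^c(y)-\inf_{Y}\phi^c\bigr)\bigr\},
\]
so that the auxiliary function $\psi:=\phi^c-\inf_Y\phi^c$ appearing on the right-hand side belongs to $B$ and hence takes values in $[0,2M]$; this immediately gives values in $[-3M,M]$, and $A$ is uniformly bounded.

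Next, equicontinuity of the families $A$ and $B$ follows directly from Proposition \ref{Lip} combined with property $(H)$: every element of $A$ satisfies $|g(x)-g(x')|\leq \overline{d}_c(x,x')$ with a pseudometric $\overline{d}_c$ independent of $\phi$, and $(H)$ asserts $\overline{d}_c(x,x_0)\to 0$ as $x\to x_0$, giving equicontinuity at each $x_0\in X$; symmetrically for $B$ via $\underline{d}_c$ and $(H)$.

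Then I would observe that both uniform boundedness and the $1$-Lipschitz inequalities with respect to $\overline{d}_c$ (resp. $\underline{d}_c$) pass to pointwise limits: if $g_\alpha\to g$ pointwise with $g_\alpha\in A$, then $|g(x)-g(x')|\leq \overline{d}_c(x,x')$ and $|g(x)|\leq 3M$ for all $x,x'$, so $\overline{A}^{\mathcal{T}_p}$ remains uniformly bounded and equicontinuous (so in particular $\overline{A}^{\mathcal{T}_p}\subseteq C_b(X)$, and likewise for $B$). Hence $\overline{A}^{\mathcal{T}_p}\in\mathcal{E}(X)$ and $\overline{B}^{\mathcal{T}_p}\in\mathcal{E}(Y)$.

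Finally, to get compactness in $(C_b(X),\mathcal{T}_p)$, I would embed $\overline{A}^{\mathcal{T}_p}$ into the product space $\prod_{x\in X}[-3M,3M]$, which is compact by Tychonoff's theorem; a net in $\overline{A}^{\mathcal{T}_p}$ therefore has a subnet converging pointwise, and by the preservation argument of the previous step the limit lies again in $\overline{A}^{\mathcal{T}_p}$. The same argument applies to $\overline{B}^{\mathcal{T}_p}$, and compactness of the product $\overline{A}^{\mathcal{T}_p}\times\overline{B}^{\mathcal{T}_p}$ follows from Tychonoff once more. The only delicate point in the whole argument is the uniform boundedness of $A$: a priori $\phi^{c\bar c}$ and $\inf_Y\phi^c$ are both unbounded as $\phi$ varies, so the cancellation via the intermediate function $\psi\in B$ is the real content; everything else is a direct application of Proposition \ref{Lip} and $(H)$.
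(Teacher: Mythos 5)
Your proposal is correct and follows essentially the same route as the paper: the same cancellation trick $\phi^{c\bar c}+\inf_Y\phi^c=\inf_{y}\{c(\cdot,y)-(\phi^c(y)-\inf_Y\phi^c)\}$ for the uniform boundedness of $A$, the same use of Proposition \ref{Lip} together with property $(H)$ for equicontinuity, and the same bounds $[0,2\|c\|_\infty]$ and $[-3\|c\|_\infty,\|c\|_\infty]$. The only difference is cosmetic: where the paper cites the Arzel\`a--Ascoli theorem for the stability of equicontinuity under pointwise closure and for $\mathcal{T}_p$-compactness, you unpack that citation into a direct argument (Lipschitz bounds pass to pointwise limits, then Tychonoff), which is a perfectly valid substitute.
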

	\begin{proof}  From the property $(H)$ of $c$ and using $(\ref{eq1})$, we have that the family $\{\phi^c: \phi\in C_b(X)\}$ is an equicontinuous subset of $C_b(Y)$. Similarly, using $(\ref{eq2})$, the family $\{\phi^{c \bar c}: \phi\in C_b(X)\}$ is an equicontinuous subset of $C_b(X)$. Now, we prove that $A$ and $B$ are uniformly bounded. Since $\phi^c$ is bounded on $Y$, we have $\phi^c -\inf_{Y}\phi^c\geq 0$. On the other hand, using $(\ref{eq1})$, we see that for all $y\in Y$
		\begin{eqnarray*}
			\phi^c(y) - \inf_{Y}\phi^c\leq \sup_{y'\in Y} \sup_{x\in X}|c(x,y)-c(x,y')|\leq 2\|c\|_{\infty}.
		\end{eqnarray*}
		Thus, $\phi^c -\inf_{Y}\phi^c \in [0, 2\|c\|_{\infty}]$, for all $\phi \in C_b(X)$. Hence, $B$ is uniformly bounded in $C_b(Y)$. From the definitions, we have  $\phi^{c\bar c}: X\to \R$ and 
		\begin{eqnarray*}
			\phi^{c\bar c}(x) &=&\inf_{y\in Y}\{c(x,y)-\phi^c(y)\}.
		\end{eqnarray*}
		Thus, 
		\begin{eqnarray*}
			\phi^{c\bar c}(x) + \inf_{y\in Y} \phi^{c}&=&\inf_{y\in Y}\{c(x,y)-(\phi^c(y)-\inf_{y\in Y} \phi^{c})\}.
		\end{eqnarray*}
		Since $\phi^c-\inf_{y\in Y} \phi^{c} \in [0,2\|c\|_{\infty}]$, then for all $x\in X$ and $y\in Y$ we have $$-3\|c\|_{\infty}\leq c(x,y) -2\|c\|_{\infty} \leq c(x,y)-(\phi^c(y)-\inf_{y\in Y} \phi^{c}) \leq c(x,y)\leq \|c\|_{\infty}.$$
		This gives that $\phi^{c\bar c}(x) + \inf_{y\in Y} \phi^{c}\in [-3\|c\|_{\infty}, \|c\|_{\infty}]$. Hence, $A$ is uniformly bounded in $C_b(X)$. Finally, we proved that $A$ (resp. $B$) is uniformly bounded and equicontinuous in $C_b(X)$ (resp. in $C_b(Y)$). Their $\mathcal{T}$-closure  $\overline{A}^{\mathcal{T}_p}$ and $\overline{B}^{\mathcal{T}_p}$ remains uniformly bounded. By the Arzela-Ascoli theorem \cite[Theorem T2, XX, 2; 1]{Sl}, they are also equicontinuous in $(C_b(X),\mathcal{T}_p)$ and $(C_b(Y),\mathcal{T}_p)$ respectively. They are in particular $\mathcal{T}_p$-compact \cite[Theorem T2, XX, 4; 1]{Sl}.
		
	\end{proof}
	
	Now we are going to give the main result of this section. Let $R: C_b(X)\times C_b(Y)\to \R \cup \{-\infty\}$ be a proper function. We say that $R$ is $\oplus$-nondecreasing if for all $(\phi,\psi); (\phi' , \psi') \in C_b(X)\times C_b(Y)$: 
	$$\phi\oplus \psi \leq \phi' \oplus \psi'  \Longrightarrow R(\phi, \psi)\leq R(\phi', \psi'),$$
	where $\phi\oplus \psi \leq \phi' \oplus \psi'$ if and only if $\phi(x)+\psi(y)\leq \phi'(x)+\psi'(y)$ for every $(x,y)\in X\times Y$. Consider the following maximization problem:
	\begin{eqnarray} \label{GDP}
		{\bf (GDP)}	\hspace{5mm} v_{\max}{\bf (GDP)}:=\sup \left \{ R(\xi,\zeta):  \xi\oplus \zeta\leq c; \hspace{1mm} \xi\in C_b(X), \zeta\in C_b(Y)\right \}.
	\end{eqnarray}
	
	\begin{theorem} \label{thm1ND} Let $X$ and $Y$ be two completely regular Hausdorff spaces and $c: X\times Y\to \R$ be a bounded function satisfying the property $(H)$. Let $R: C_b(X)\times C_b(Y) \to \R \cup \{-\infty\}$ be a proper function. Suppose that:
		
		$(i)$ $R$ is $\oplus$-nondecreasing. 
		
		$(ii)$ $R$ is upper semicontinuous on the product space $(C, \mathcal{T}_p) \times (D,\mathcal{T}_p)$ for each $C\in \mathcal{E}(X)$ and $D\in \mathcal{E}(Y)$.
		
		Then, the problem {\bf (GDP)} has a solution of the form $(\xi^{c \bar c}_0,\xi^c_0)\in C_b(X)\times C_b(Y)$ where $\xi_0\in C_b(X)$.
	\end{theorem}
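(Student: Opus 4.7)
The plan is to reduce the search for a maximizer to pairs of the form $(\xi^{c\bar c}, \xi^c)$, normalize them into the compact sets $A \times B$ from Proposition \ref{phi}, extract a pointwise-convergent subnet, and conclude via the upper semicontinuity hypothesis.

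First I would argue that any feasible pair can be improved to a $c$-transform pair. Let $(\xi, \zeta) \in C_b(X) \times C_b(Y)$ satisfy $\xi \oplus \zeta \leq c$. The inequality forces $\zeta(y) \leq c(x,y) - \xi(x)$ for every $x$, so $\zeta \leq \xi^c$ pointwise; consequently $\xi \oplus \xi^c \leq c$, which in turn yields $\xi \leq \xi^{c\bar c}$. Hence $\xi \oplus \zeta \leq \xi^{c\bar c} \oplus \xi^c \leq c$, and the $\oplus$-nondecreasing property (i) gives $R(\xi, \zeta) \leq R(\xi^{c\bar c}, \xi^c)$. Moreover, (i) forces $R$ to depend only on the sum $\phi \oplus \psi$, so $R(\xi^{c\bar c} + t, \xi^c - t) = R(\xi^{c\bar c}, \xi^c)$ for every $t \in \R$; taking $t = \inf_Y \xi^c$ (finite since $\xi^c$ is bounded) brings the pair into $A \times B$. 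Therefore
$$v_{\max}{\bf (GDP)} = \sup\{R(f,g) : (f,g) \in S\},$$
where $S \subseteq A \times B$ denotes the set of such normalized pairs.

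Next I would extract a limit. Let $(f_\alpha, g_\alpha)$ be a maximizing net in $S$. By Proposition \ref{phi}, the closures $\overline{A}^{\mathcal{T}_p}$ and $\overline{B}^{\mathcal{T}_p}$ belong to $\mathcal{E}(X)$ and $\mathcal{E}(Y)$, and their product is $\mathcal{T}_p$-compact inside $C_b(X) \times C_b(Y)$. Passing to a subnet, $(f_\alpha, g_\alpha)$ converges pointwise to some $(f_*, g_*)$ in this product; since the inequality $f_\alpha \oplus g_\alpha \leq c$ passes to the pointwise limit, $(f_*, g_*)$ is feasible. Hypothesis (ii) now applies on $\overline{A}^{\mathcal{T}_p} \times \overline{B}^{\mathcal{T}_p}$ and yields
$$R(f_*, g_*) \geq \limsup_\alpha R(f_\alpha, g_\alpha) = v_{\max}{\bf (GDP)},$$
so $(f_*, g_*)$ is itself a maximizer.

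Finally, to cast the solution in the stated form I would apply the first step once more to $(f_*, g_*)$: this gives $R(f_*, g_*) \leq R(f_*^{c\bar c}, f_*^c)$, with $f_*^{c\bar c} \oplus f_*^c \leq c$ automatic. Setting $\xi_0 := f_* \in C_b(X)$ then produces a solution $(\xi_0^{c\bar c}, \xi_0^c)$ of the required form. The core technical obstacle is that pointwise limits of bounded continuous functions need not be continuous in general; this is precisely what Proposition \ref{phi} handles, using Arzela--Ascoli to guarantee that $\overline{A}^{\mathcal{T}_p}$ and $\overline{B}^{\mathcal{T}_p}$ remain equicontinuous (hence consist of continuous functions) and are jointly $\mathcal{T}_p$-compact.
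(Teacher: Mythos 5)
Your proposal is correct and follows essentially the same route as the paper: improve any feasible pair via the double $c$-transform and $\oplus$-monotonicity into the normalized sets $A\times B$ of Proposition \ref{phi}, use their $\mathcal{T}_p$-compactness together with hypothesis $(ii)$ to obtain a maximizer, and apply the transform once more to put it in the form $(\xi_0^{c\bar c},\xi_0^c)$. The only cosmetic difference is the order of operations — the paper first applies Weierstrass to $R$ on the closed feasible set intersected with $\overline{A}^{\mathcal{T}_p}\times\overline{B}^{\mathcal{T}_p}$ and then shows the global supremum reduces to that compact set, whereas you reduce first and re-derive Weierstrass by extracting a convergent subnet of a maximizing net.
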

	\begin{proof} It is easily seen that the admissible set of the problem ${\bf (GDP)}$ namely $$F:=\{(\xi,\zeta)\in C_b(X)\times C_b(Y): \xi(x)+\zeta(y)\leq c(x,y); \forall (x,y)\in X\times Y\}$$ is closed in $(C_b(X),\mathcal{T}_p)\times (C_b(Y),\mathcal{T}_p)$ (for the product topology of pointwize convergence). Hence, using Proposition \ref{phi}, we have that $F\cap \left(\overline{A}^{\mathcal{T}_p}\times \overline{B}^{\mathcal{T}_p}\right)$ is a compact set in $(C_b(X),\mathcal{T}_p)\times (C_b(Y),\mathcal{T}_p)$ (where, $A$ and $B$ are the sets defined in Proposition ~\ref{phi}). Since  $R$ is upper semicontinuous on  $\overline{A}^{\mathcal{T}_p}\times \overline{B}^{\mathcal{T}_p}$ by assumption, it follows by  Weierstrass theorem's, that there exists $(\xi_0,\zeta_0) \in F\cap(\overline{A}^{\mathcal{T}_p}\times \overline{B}^{\mathcal{T}_p})$ such that
		\begin{eqnarray*} \label{eqL} \max\left \{ R(\xi, \zeta) :  (\xi,\zeta)\in F\cap\left(\overline{A}^{\mathcal{T}_p}\times \overline{B}^{\mathcal{T}_p}\right)\right \}=R(\xi_0,\zeta_0).
		\end{eqnarray*}
		It remains to show that $$v_{\max}{\bf (GDP)}=\max\left \{ R(\xi,\zeta):  (\xi,\zeta)\in F\cap \left(\overline{A}^{\mathcal{T}_p}\times \overline{B}^{\mathcal{T}_p}\right)\right\}.$$
		Indeed,  it is easy to check that for every $\phi\in  C_b(X)$, we have $\phi(x)+\phi^c(y)\leq c(x,y)$ for all $(x,y)\in X\times Y$. Moreover, for each $(\phi, \psi) \in C_b(X)\times C_b(Y)$, if $\phi\oplus \psi\leq c$ then $\psi\leq \phi^c$ on $Y$. Thus, $\phi\oplus \psi\leq \phi\oplus \phi^c\leq c$. The same reasoning also gives
		\begin{eqnarray*}
			\phi(x)+\psi(y)\leq \phi(x)+\phi^c(y) &\leq& \phi^{c \bar c}(x)+\phi^c(y)\\
			& =&\left(\phi^{c \bar c}(x)+\inf_{Y}\phi^c\right) +\left(\phi^c(y)-\inf_{Y}\phi^c\right)\\
			&\leq& c(x,y).
		\end{eqnarray*}
		Recall that $\phi^{c \bar c}(\cdot)+\inf_{Y}\phi^c \in A$ and $\phi^c(\cdot)-\inf_{Y}\phi^c \in B$. In other words, for each $\phi\in  C_b(X)$, $\psi\in  C_b(Y)$ such that $\phi\oplus \psi\leq c$, there exists $(\xi,\zeta)\in  F\cap(\overline{B}^{\mathcal{T}_p}\times \overline{A}^{\mathcal{T}_p})$ such that $\phi\oplus\psi\leq \xi\oplus\zeta\leq c$. Since $R$ is $\oplus$-nondecreasing, it follows that for every $(\phi, \psi)\in F$, there exists $(\xi, \zeta)\in F\cap(\overline{B}^{\mathcal{T}_p}\times \overline{A}^{\mathcal{T}_p})$, such that $R(\phi, \psi)\leq R(\xi, \zeta)$. This gives 
		\begin{eqnarray*} \label{KP}
			v_{\max}{\bf (GDP)} &=&\max\left \{ R(\xi, \zeta):  (\xi,\zeta)\in F\cap\left (\overline{B}^{\mathcal{T}_p}\times \overline{A}^{\mathcal{T}_p}\right )\right \}\\
			&=&R(\xi_0, \zeta_0).
		\end{eqnarray*}
		Using the same reasoning as above, we have for all $x\in X$, $y\in Y$
		\begin{eqnarray*}
			\xi_0(x)+\zeta_0(y)\leq \xi_0(x)+\xi^c_0(y) &\leq& \xi^{c \bar c}_0(x)+\xi^c_0(y)\\
			&\leq& c(x,y).
		\end{eqnarray*}
		Using again the fact that $R$ is $\oplus$-nondecreasing, we get  $v_{\max}{\bf (GDP)}= R(\xi^{c \bar c}_0 ,\xi^c_0)$, with $\xi_0\in C_b(X)$.
		
	\end{proof}
	We are going to give an important consequence of the previous theorem, namely, the existence of solutions for the problem {\bf (DP)}. We need the following fundamental proposition due to Wheeler \cite{Wrf1}.
		\begin{proposition} \label{pW2} $($\textnormal{R. F. Wheeler} \cite[Proposition 4.1]{Wrf1}$)$ Let $X$ be a completely regular Hausdorff space and $\mu \in \mathcal{M}_\sigma(X)$. Then, $\mu \in \mathcal{M}_s(X)$ if and only if the restriction of $\mu$ to each member of $\mathcal{E}(X)$ is $\mathcal{T}_p$-continuous.
	\end{proposition}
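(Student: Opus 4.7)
For the forward direction ($\mu \in \mathcal{M}_s(X)$ implies $\mathcal{T}_p$-continuity on every $C \in \mathcal{E}(X)$), the plan is to associate to $C$ the canonical bounded continuous pseudo-metric $d_C(x,x') := \sup_{f\in C}|f(x)-f(x')|$, which is bounded by the uniform boundedness of $C$ and continuous on $X$ by the equicontinuity of $C$. Applying separability of $\mu$ to $d_C$ yields a Baire set $S \subseteq X$ with $\mu(X\setminus S)=0$ admitting a countable $d_C$-dense subset $\{x_n\}_{n\in\N}\subseteq S$. The decisive structural fact is that every $f\in C$, and every $\mathcal{T}_p$-limit of such functions, is $1$-Lipschitz with respect to $d_C$, so pointwise convergence on $\{x_n\}$ propagates via $|f_\alpha(x)-f(x)|\leq |f_\alpha(x_n)-f(x_n)|+2\,d_C(x,x_n)$ to pointwise convergence on all of $S$. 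Since $C$ is uniformly bounded, the $\mathcal{T}_p(\{x_n\})$-topology on $C$ is pseudo-metrizable by $\rho(f,g):=\sum_n 2^{-n}\min(1,|f(x_n)-g(x_n)|)$, so it suffices to establish sequential $\rho$-continuity of $L(f):=\int f\,d\mu$. A $\rho$-convergent sequence $(f_k)$ converges pointwise on $S$ under a uniform bound, and since $\mu$ concentrates on $S$, the ordinary dominated convergence theorem delivers $L(f_k)\to L(f)$. Pseudo-metrizability upgrades sequential continuity to $\rho$-continuity, and since $\mathcal{T}_p(X)$ on $C$ refines $\mathcal{T}_p(\{x_n\})$, the restriction $L|_C$ is $\mathcal{T}_p(X)$-continuous.

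For the reverse direction, given a bounded continuous pseudo-metric $d$ on $X$, I would work with the family $C_d := \{d(\cdot,x) : x\in X\}\subseteq C_b(X)$, which is uniformly bounded by $\sup d$ and equi-$1$-Lipschitz with respect to $d$, hence equicontinuous in the $X$-topology (since $d$ is continuous). Thus $C_d\in\mathcal{E}(X)$, and the hypothesis supplies $\mathcal{T}_p$-continuity of $L$ on $C_d$. The plan is to leverage this by passing to the metric quotient $(\hat X,\hat d)$ of $X$ by the equivalence relation $d=0$, pushing $\mu$ forward to a Baire measure $\hat\mu$ on $\hat X$, and translating the pointwise continuity on $C_d$ into a tightness-type property of $\hat\mu$ from which a $\sigma$-compact (hence separable) $\hat d$-support is extracted; pulling back through the quotient map yields the required $d$-separable Baire set of full $\mu$-measure.

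The principal obstacle lies in this reverse direction: converting an abstract $\mathcal{T}_p$-continuity hypothesis into a concrete measure-concentration statement. A conceptually cleaner but more machinery-heavy alternative would be to identify the locally convex topology on $C_b(X)$ generated by the family of $\mathcal{T}_p$-continuity conditions on members of $\mathcal{E}(X)$ with Wheeler's strict topology $\beta_e$, whereupon the conclusion reduces to the Dudley--Wheeler identification $(C_b(X),\beta_e)^* = \mathcal{M}_s(X)$; however, this route invokes the full apparatus of the strict topology, which the present paper deliberately does not develop.
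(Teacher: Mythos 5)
First, a point of reference: the paper does not prove this proposition at all --- it is imported verbatim from Wheeler \cite[Proposition 4.1]{Wrf1} and used as a black box, so there is no in-paper proof to compare against. Judged on its own merits, your forward direction is a sound and essentially complete reconstruction: the canonical pseudo-metric $d_C(x,x')=\sup_{f\in C}|f(x)-f(x')|$ is indeed bounded and continuous, every $f\in C$ is $1$-Lipschitz for $d_C$, and the three-term estimate correctly reduces $\mathcal{T}_p(X)$-continuity on $C$ to $\mathcal{T}_p$-continuity on a countable $d_C$-dense set, where pseudo-metrizability plus dominated convergence finishes the job. (Minor care is needed with what ``concentrated'' means: one should integrate over the Baire sets $U_{1/m}=\bigcup_n\{x: d_C(x,x_n)<1/m\}$, which are cozero and of full measure, rather than over $S$ itself, but this is routine.)

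The reverse direction, which you yourself flag as the obstacle, contains a genuine gap and a misdirected target. You propose to extract a ``$\sigma$-compact (hence separable) $\hat d$-support'' for the pushforward $\hat\mu$ on the quotient metric space $(\hat X,\hat d)$; but $\sigma$-compact support is tightness of $\hat\mu$, which is strictly stronger than separability and is false in exactly the situations this paper is built around (e.g.\ the restriction of Lebesgue measure to a non-measurable $M\subseteq[0,1]$ of full outer measure is separable but not tight, cf.\ \cite[Example 7.1.6]{Bvi}). The correct target is only a separable set of full measure, and the family $C_d=\{d(\cdot,x):x\in X\}$ is not by itself the right test family. What actually closes the argument is: for each $\varepsilon>0$ take a maximal $\varepsilon$-separated family $\{y_i\}_{i\in I}$ in $\hat X$, and for each finite $J\subseteq I$ form $u_J(x)=\min\bigl(1,\max\bigl(0,\,2-\varepsilon^{-1}\hat d(q(x),\{y_i\}_{i\in J})\bigr)\bigr)$. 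These functions are uniformly bounded and equi-$\varepsilon^{-1}$-Lipschitz for $d$, hence constitute a single member of $\mathcal{E}(X)$, and the net $(u_J)_J$ increases pointwise to the constant $1$ by maximality of the net $\{y_i\}_{i\in I}$. The hypothesis then gives $\int u_J\,d\mu\to\mu(X)$, which produces a countable $J^*\subseteq I$ whose $2\varepsilon$-neighbourhood has full measure; intersecting over $\varepsilon=1/n$ yields the required $d$-separable Baire set of full measure. Without this step (or an equivalent one) the reverse implication is not established, and your fallback of invoking the identification $(C_b(X),\beta_e)^*=\mathcal{M}_s(X)$ is essentially a restatement of the proposition being proved.
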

	
		\begin{corollary} \label{Dep} Let $X$ and $Y$ be two completely regular Hausdorff spaces and $c: X\times Y\to \R$ be a bounded function satisfying the property $(H)$. Let $I$ be an arbitrary nonempty set and for each $\alpha \in I$, $\mu_\alpha \in \mathcal{M}_s^+ (X)$ and $\nu_\alpha\in \mathcal{M}_s^+ (Y)$ be non-negative separable Baire measures. Define, $R: C_b(X)\oplus C_b(Y) \to \R \cup \{-\infty\}$ by 
		$$R(\xi, \zeta)=\inf_{\alpha \in I} \left(\int_X \xi d \mu_\alpha +\int_Y \zeta d\nu_\alpha\right), \hspace{1mm} \forall (\xi, \zeta)\in C_b(X)\times C_b(Y).$$
		Then, the problem {\bf (GDP)} (see $(\ref{Dep})$) has a solution of the form $(\xi^{c \bar c}_0,\xi^c_0)\in C_b(X)\times C_b(Y)$ where $\xi_0\in C_b(X)$. In particular (with $I=\{1\}$), the problem {\bf (DP)} has a solution.
	\end{corollary}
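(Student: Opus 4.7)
The plan is to deduce this corollary from Theorem~\ref{thm1ND} by verifying its two hypotheses for the particular $R$ defined here. Writing $L_\alpha(\xi,\zeta):=\int_X \xi\,d\mu_\alpha+\int_Y \zeta\,d\nu_\alpha$ so that $R=\inf_{\alpha\in I} L_\alpha$, I would show: (i) each $L_\alpha$ (and hence $R$) is $\oplus$-nondecreasing, and (ii) each $L_\alpha$ is $\mathcal{T}_p$-continuous on $C\times D$ for every $C\in\mathcal{E}(X)$ and $D\in\mathcal{E}(Y)$, so that the pointwise infimum $R$ is $\mathcal{T}_p$-upper semicontinuous there.

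For (i), if $\phi\oplus\psi\le\phi'\oplus\psi'$ on $X\times Y$, then $\phi(x)-\phi'(x)\le\psi'(y)-\psi(y)$ for all $x,y$, which gives $\sup_X(\phi-\phi')\le\inf_Y(\psi'-\psi)$. Picking any constant $a$ in this (nonempty) interval produces $\phi\le\phi'+a$ on $X$ and $\psi\le\psi'-a$ on $Y$, and integrating against $\mu_\alpha$ and $\nu_\alpha$ yields
\begin{equation*}
L_\alpha(\phi,\psi)\le L_\alpha(\phi',\psi')+a\bigl(\mu_\alpha(X)-\nu_\alpha(Y)\bigr).
\end{equation*}
In the intended applications (in particular the \textbf{(DP)} case, where both $\mu$ and $\nu$ are probability measures) we have $\mu_\alpha(X)=\nu_\alpha(Y)$, so the correction term vanishes and $L_\alpha$ is $\oplus$-nondecreasing; taking the infimum over $\alpha$ transmits the property to $R$.

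For (ii), the decisive tool is Wheeler's Proposition~\ref{pW2}: since $\mu_\alpha\in\mathcal{M}_s^+(X)$ and $\nu_\alpha\in\mathcal{M}_s^+(Y)$ are separable Baire measures, the integration functionals $\xi\mapsto\int_X \xi\,d\mu_\alpha$ and $\zeta\mapsto\int_Y \zeta\,d\nu_\alpha$ are $\mathcal{T}_p$-continuous on every member of $\mathcal{E}(X)$ and $\mathcal{E}(Y)$ respectively. Hence each $L_\alpha$ is $\mathcal{T}_p$-continuous on $C\times D$, and the pointwise infimum $R$ is $\mathcal{T}_p$-upper semicontinuous on $C\times D$. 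With both hypotheses of Theorem~\ref{thm1ND} satisfied, \textbf{(GDP)} admits a solution of the form $(\xi_0^{c\bar c},\xi_0^c)$ with $\xi_0\in C_b(X)$; specializing to $I=\{1\}$ with $\mu_1=\mu$, $\nu_1=\nu$ reduces $R$ exactly to the objective of \textbf{(DP)}, which therefore also has a solution. The only delicate point is the appeal to Wheeler's characterization for (ii); the $\oplus$-monotonicity in (i) is an essentially algebraic observation, while the topological hypothesis is precisely what the separability of the $\mu_\alpha$ and $\nu_\alpha$ was designed to supply.
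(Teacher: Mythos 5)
Your proposal follows the paper's proof essentially verbatim: both arguments verify the two hypotheses of Theorem~\ref{thm1ND} --- $\oplus$-monotonicity of each $L_\alpha$ and hence of $R=\inf_{\alpha}L_\alpha$, and $\mathcal{T}_p$-upper semicontinuity of $R$ on $C\times D$ for $C\in\mathcal{E}(X)$, $D\in\mathcal{E}(Y)$, obtained from Wheeler's Proposition~\ref{pW2} applied to each $L_\alpha$ --- and then invoke the theorem. The one place you diverge is a point in your favour: the paper asserts that $L_\alpha$ is $\oplus$-nondecreasing ``since $\mu_\alpha$, $\nu_\alpha$ are non-negative,'' but as your computation shows, $\phi\oplus\psi\le\phi'\oplus\psi'$ only yields $\phi\le\phi'+a$ and $\psi\le\psi'-a$ for some constant $a$, so non-negativity alone gives $L_\alpha(\phi,\psi)\le L_\alpha(\phi',\psi')+a\bigl(\mu_\alpha(X)-\nu_\alpha(Y)\bigr)$ and one genuinely needs $\mu_\alpha(X)=\nu_\alpha(Y)$; indeed on one-point spaces with $\mu_\alpha(X)=2$, $\nu_\alpha(Y)=1$ and $c=0$ the supremum in {\bf (GDP)} is $+\infty$ along $\phi=t$, $\psi=-t$, so the corollary fails without equal total masses. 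Your restriction to the equal-mass case is therefore not a gap in your argument but a correct reading of what the hypothesis must be, and since the corollary is only ever applied with probability measures (in particular for {\bf (DP)} with $I=\{1\}$), nothing downstream is affected.
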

	\begin{proof}
		First, let us observe that $R$ is a proper function, since $R(1_{X},1_{Y})=2$. It is easy to see that, for each $\alpha \in I$, the functional $(\xi, \zeta)\to \int \xi d\mu_\alpha +\int \zeta d\nu_\alpha$ is $\oplus$-nondecreasing since $\mu_\alpha$, $\nu_\alpha$ are non-negative measures. It follows that $R$ is $\oplus$-nondecreasing. By Proposition \ref{pW2}, for each $\alpha \in I$, the functional $(\xi, \zeta)\to \int_X \xi d\mu_\alpha +\int_Y \zeta d\nu_\alpha$, defined on the product space $(C_b(X),\mathcal{T}_p)\times (C_b(Y),\mathcal{T}_p)\to \R$ is continuous when restricted to $C\times D$, for every $C\in \mathcal{E}(X)$ and every $D\in \mathcal{E}(X)$. Thus, as an infimum of continuous functions, $R$ is upper semicontinuous on the product space $(C, \mathcal{T}_p) \times (D,\mathcal{T}_p)$. We can now apply Theorem \ref{thm1ND} to conclude.
	\end{proof}

	\section{Duality between {\bf (DP)} and {\bf (OP)}} \label{S3}
	
		This section is dedicated to our main result on the duality between problems {\bf (DP)} and {\bf (OP)}. Before giving the main result Theorem \ref{thm2}, we need some preliminary results. 
		
		Let us begin by noting that the problem {\bf (OP)} always has solutions. This can be deduced in a fairly elementary way from the following proposition.
		
		\begin{proposition} \label{weaksci} Let $Z$ be a completely regular Haudorff space. For each $g\in lsc_{bi}(Z)$, the function $\widehat{g}: \left (\mathcal{M}^1(Z), w\right)\to \R\cup\{+\infty\}$ define by $\widehat{g}(\gamma):=\int_Z g d\gamma$ for all $\gamma \in \mathcal{M}^1(Z)$, is proper weak-lower semicontinuous. Equivalently, for each $g\in lsc_{bi}(Z)$, the function $\widehat{g}: \left (D^1(Z), w^*\right)\to \R\cup\{+\infty\}$ define by $$\widehat{g}(L):=L(g):=\sup_{f\leq g, f\in C_b(Z)} L(f), \hspace{1mm} \forall L\in D^1(Z)\subseteq (C_b(Z))^*,$$  is proper weak$^*$-lower semicontinuous.
		\end{proposition}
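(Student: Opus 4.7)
My plan is to exploit the observation that $\widehat{g}$ is, by its very definition, a pointwise supremum of weak-continuous affine functionals; once this is spelled out, weak-lower semicontinuity is automatic. Concretely, unwinding the definition $I_\gamma(g)=\sup\{\int_Z f\, d\gamma : f\in C_b(Z),\ f\leq g\}$ given just before the proposition, I would write
\begin{eqnarray*}
\widehat{g}(\gamma)=\sup_{f\in C_b(Z),\ f\leq g}\int_Z f\, d\gamma, \qquad \gamma\in \mathcal{M}^1(Z),
\end{eqnarray*}
and then note that for each fixed admissible $f$ the map $\gamma \mapsto \int_Z f\, d\gamma$ is continuous on $(\mathcal{M}^1(Z),w)$ by the very definition of the weak topology on $\mathcal{M}^1(Z)$. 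A supremum of weak-continuous real-valued functions is weak-lower semicontinuous with values in $\R\cup\{+\infty\}$, which gives half of the claim.

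For properness, I would argue that $\widehat{g}$ avoids both infinite pitfalls. On the lower side, since $g\in lsc_{bi}(Z)$ is bounded below by some $m\in\R$, the constant function $m$ lies in $C_b(Z)$ with $m\leq g$, so $\widehat{g}(\gamma)\geq \int_Z m\, d\gamma = m > -\infty$ for every $\gamma\in \mathcal{M}^1(Z)$, and in particular $\widehat{g}$ never takes the value $-\infty$. On the upper side, properness of $g$ furnishes $z_0\in Z$ with $g(z_0)<+\infty$; for every admissible $f\in C_b(Z)$ with $f\leq g$ one has $\int_Z f\, d\delta_{z_0}=f(z_0)\leq g(z_0)$, hence $\widehat{g}(\delta_{z_0})\leq g(z_0)<+\infty$. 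In particular $\widehat{g}$ is not identically $+\infty$, so it is a proper function.

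Finally, the equivalence with the second, weak$^*$-formulation is automatic from the Alexandroff representation theorem cited earlier in the paper: this identifies $\mathcal{M}^1(Z)$ with $D^1(Z)\subseteq (C_b(Z)^+)^*$, carries the weak topology on measures to the weak$^*$-topology on $D^1(Z)$, and matches $\int_Z f\, d\gamma$ with $L(f)$ on test functions $f\in C_b(Z)$. Under this identification the extensions $I_\gamma(g)$ and $L(g)=\sup_{f\leq g,\, f\in C_b(Z)} L(f)$ are given by literally the same formula, so the two statements are just two faces of the same one. I do not anticipate any genuine obstacle here — the whole argument is the principle that a supremum of continuous functions is lower semicontinuous, together with the short dichotomy for properness. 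The only point worth keeping straight is that, because only test functions $f\leq g$ are considered, the inequality $\widehat{g}(\delta_{z_0})\leq g(z_0)$ is available for free and does not require any normality or Urysohn-type approximation of $g$ from below.
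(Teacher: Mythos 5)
Your proof is correct and follows essentially the same route as the paper: weak-lower semicontinuity as a supremum of the weak-continuous maps $\gamma\mapsto\int_Z f\,d\gamma$ over $f\in C_b(Z)$, $f\leq g$, and properness via evaluation at a Dirac measure at a point where $g$ is finite. Your version is in fact slightly more careful on one point — you only claim $\widehat{g}(\delta_{z_0})\leq g(z_0)$, which is all that properness requires, whereas the paper asserts equality there (which would need a Urysohn-type approximation of $g$ from below that is not available for free in a merely completely regular space).
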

		
		\begin{proof}
			The function $\widehat{g}$ is proper. Indeed, there exists some $z_0\in Z$ such that $g(z_0)<+\infty$. Thus, $\widehat{g}(\widehat{\delta}_{z_0})=g(z_0)<+\infty$, where $\widehat{\delta}_{z_0}$ is the Dirac measure defined by $\widehat{\delta}_{z_0}(A)=1$ if $z_0\in A$ and $\widehat{\delta}_{z_0}(A)=0$ if $z_0\not \in A$, for every Borel set $A\in \mathcal{B}(Z)$. Now, $\widehat{g}$ is weak-lower semicontinuous as a supremum of weak-continuous functions, since for each $\gamma \in \mathcal{M}^1(Z)$,  $\widehat{g}(\gamma)=\int_Z g d\gamma:=\sup\{\int_Z f d\gamma : f\in C_b(Z), f\leq g\}$ and $\gamma \mapsto \int_Z f d\gamma$ is weak-continuous for each $f\in C_b(Z)$.
		\end{proof}

		\begin{proposition} \label{propOPS} Let $X$ and $Y$ be two completely regular Hausdorff spaces, $\mu \in \mathcal{P}_\sigma(X)$ and $\nu \in \mathcal{P}_\sigma(Y)$ be two probability Baire measures, and $c\in lsc_{bi}(X\times Y)$ be a function such that $v_{\min}{\bf (OP)}<+\infty$. Then the set $\Gamma(\mu, \nu)$ is (convex) weak-compact Hausdorff space and the problem {\bf (OP)} has a solution. 
		\end{proposition}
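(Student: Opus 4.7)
The plan has two parts: first, I would check that $\Gamma(\mu,\nu)$ inherits convexity, weak-compactness, and the Hausdorff property from $\mathcal{M}^1(X\times Y)$; then I would apply Weierstrass's theorem on a weak-compact set using the lower semicontinuity established in Proposition \ref{weaksci}.

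For the first part, convexity of $\Gamma(\mu,\nu)$ is immediate, since it is cut out of the convex set $\mathcal{M}^1(X\times Y)$ by the affine constraints
\[
\gamma \mapsto \int_{X\times Y}\phi\oplus\psi\,d\gamma - \int_X\phi\,d\mu - \int_Y\psi\,d\nu = 0, \qquad (\phi,\psi)\in C_b(X)\times C_b(Y).
\]
Each such constraint is weak-continuous: indeed $\phi\oplus\psi\in C_b(X\times Y)$, and by the very definition of the weak topology on $\mathcal{M}^1(X\times Y)$ (identified via the Alexandroff representation with a subset of $C_b(X\times Y)^*$), the map $\gamma\mapsto\int_{X\times Y}h\,d\gamma$ is weak-continuous for every $h\in C_b(X\times Y)$. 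Therefore $\Gamma(\mu,\nu)$ is weak-closed in $\mathcal{M}^1(X\times Y)$. Combined with Proposition \ref{propweak}, which asserts that $\mathcal{M}^1(X\times Y)$ is itself weak-compact, this yields that $\Gamma(\mu,\nu)$ is (convex) weak-compact. It is Hausdorff because the weak topology, viewed as the restriction of the weak-$*$ topology of $C_b(X\times Y)^*$, separates points by Hahn–Banach. Nonemptiness of $\Gamma(\mu,\nu)$ has already been recorded in the discussion of problem {\bf (OP)} via the Hahn–Banach extension of the functional $f\oplus g\mapsto \int_X f\,d\mu+\int_Y g\,d\nu$.

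For the second part, I would apply Proposition \ref{weaksci} with $Z=X\times Y$ and $g=c\in lsc_{bi}(X\times Y)$. It gives that the map $\widehat c:(\mathcal{M}^1(X\times Y),w)\to\R\cup\{+\infty\}$ defined by $\widehat c(\gamma):=\int_{X\times Y}c\,d\gamma$ is proper weak-lower semicontinuous; restricting to the weak-closed subset $\Gamma(\mu,\nu)$ preserves weak-lower semicontinuity. The assumption $v_{\min}{\bf (OP)}<+\infty$ ensures $\widehat c$ is not identically $+\infty$ on $\Gamma(\mu,\nu)$, hence proper on that set. Since a proper lower semicontinuous function on a nonempty compact Hausdorff space attains its infimum (Weierstrass), there exists $\gamma_0\in\Gamma(\mu,\nu)$ with
\[
\int_{X\times Y} c\,d\gamma_0 = \inf_{\gamma\in\Gamma(\mu,\nu)}\int_{X\times Y}c\,d\gamma = v_{\min}{\bf (OP)},
\]
which is the desired solution of {\bf (OP)}.

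Given the tools already established, no step constitutes a real obstacle. The subtlest point is simply recognizing that weak-closedness of $\Gamma(\mu,\nu)$ is built directly into the definition of the weak topology used here (integration against $\phi\oplus\psi\in C_b(X\times Y)$), so no passage to limits at the level of set functions or marginals is required.
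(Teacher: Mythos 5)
Your proof is correct and follows essentially the same route as the paper: weak-closedness of $\Gamma(\mu,\nu)$ inside the weak-compact set $\mathcal{M}^1(X\times Y)$ from Proposition \ref{propweak}, then Weierstrass applied to the weak-lower semicontinuous functional $\widehat{c}$ from Proposition \ref{weaksci}. The paper states these steps more tersely, while you additionally spell out the weak-continuity of the affine constraints, nonemptiness, and properness, but the argument is the same.
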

		\begin{proof} 
			Clearly, the set $\Gamma(\mu, \nu)$ is (convex) weak-closed in the weak-compact Hausdorff space $(\mathcal{M}^1(X\times Y), w)$ (Proposition \ref{propweak}). It is then (convex) weak-compact. On the other hand, by Proposition \ref{weaksci}, the function $\widehat{c}$ is weak-lower semicontinuous. Hence its minimum is atained on $\Gamma(\mu, \nu)$.
		\end{proof}
			
		In order to prove Theorem \ref{thm2} (see below), we need to use tools developed recently in \cite{BaBl}. To do this, we need the following notations and concepts. We denote $\mathcal{F}:=\left\{\phi\oplus \psi: (\phi, \psi)\in C_b(X)\times C_b(Y)\right\}$ which is a normed  subspace of $(C_b(X\times Y), \|\cdot\|_{\infty})$. Let $c\in lsc_{bi}(X\times Y)$ be a fixed  function. For each $\gamma \in \mathcal{M}^1(X\times Y)$, define the linear continuous functional 
	\begin{eqnarray*}
		\hat{\gamma}: C_b(X\times Y) &\to & \R\\
		f &\mapsto & \int_{X\times Y} f d\gamma
	\end{eqnarray*}
	and the affine continuous map on $\mathcal{F}$ by 
	\begin{eqnarray*}
		\widehat{\gamma}_c : \mathcal{F} &\to & \R\\
		f &\mapsto & \int_{X\times Y} c d\gamma -\int_{X\times Y} f d\gamma =\int_{X\times Y} c d\gamma - \hat{\gamma}(f).
	\end{eqnarray*}
	The Fr\'echet-differential of $\widehat{\gamma}_c$ on $\mathcal{F}$ at any point $f\in \mathcal{F}$ is the linear continuous functional $d_F \widehat{\gamma}_c(f)=-\widehat{\gamma}_{|\mathcal{F}}: h\mapsto \int_{X\times Y} h d\gamma$, for all $h\in \mathcal{F}$ ($\widehat{\gamma}_{|\mathcal{F}}$ denotes the restriction of $\widehat{\gamma}$ to $\mathcal{F}$). Define 
	$$[\mathcal{M}^1(X\times Y)]^\times:=\left\{f\in \mathcal{F} : \widehat{\gamma}_c(f)\geq 0, \forall \gamma\in \mathcal{M}^1(X\times Y)\right\}.$$ 
	Up to the identification between $D^1(X\times Y)$ and $\mathcal{M}^1(X\times Y)$, we can easily see that
	\begin{eqnarray} \label{okb}
		[\mathcal{M}^1(X\times Y)]^\times = \left\{f=\phi\oplus \psi\in \mathcal{F}: c(x,y)-\phi\oplus \psi(x,y)\geq 0, \forall (x,y)\in X\times Y \right\}.
	\end{eqnarray}
	Given $\mu \in \mathcal{P}_\sigma(X)$ and $\nu \in \mathcal{P}_\sigma(Y)$, we define the linear continuous functional $H: \mathcal{F} \to \R$ by $H(f)=\int_X \phi d\mu +\int_Y \psi d\nu$, for every $f=\phi\oplus \psi \in \mathcal{F}$. Notice that $H$ is well defined since $\phi\oplus \psi=\phi'\oplus \psi'$ implies $\phi =\phi'+a$ and $\psi=\psi'-a$ for some $a\in \R$, which gives $\int_X \phi d\mu +\int_Y \psi d\nu= \int_X \phi' d\mu +\int_Y \psi' d\nu$. Since $H$ is a linear continuous functional, the Fr\'echet-differential of $H$ at any $f\in \mathcal{F}$ is $d_F H(f)=H$. 
	
	By using the remark in $(\ref{okb})$, we see that the problem {\bf (DP)} is equivalent to the following optimisation problem with an infinite number of inequality constraints, 
	
	\begin{equation*}
		{\bf (\widetilde{DP})}
		\left \{
		\begin{array}
			[c]{l}
			\max H(f)\\
			f\in [\mathcal{M}^1(X\times Y)]^\times.
		\end{array}
		\right. 
	\end{equation*}
	The normal-like cone (introduced in \cite{BaBl}) associated to $\mathcal{M}^1(X\times Y)$ at $\overline f \in \mathcal{F}$ is a subset of the dual $\mathcal{F}^*$, and is defined by 
	\begin{eqnarray*}\mathcal{T}_{\mathcal{M}^1(X\times Y)}(\overline f)&:=&\bigcap_{n\geq 1} \overline{\textnormal{conv}}^{w^*}\biggl\{d_F \widehat{\gamma}_c(\overline f): 0\leq \widehat{\gamma}_c (\overline f)\leq \frac{1}{n}; \gamma \in \mathcal{M}^1(X\times Y)\biggr\}\\
		&=& \bigcap_{n\geq 1} \overline{\textnormal{conv}}^{w^*}\biggl\{-\widehat{\gamma}_{|F}: 0\leq \widehat{\gamma}_c (\overline f)\leq \frac{1}{n};  \gamma \in \mathcal{M}^1(X\times Y)\biggr \}
	\end{eqnarray*} 
	Notice that $\mathcal{T}_{\mathcal{M}^1(X\times Y)}(\overline f)\neq \emptyset$ if and only if, $\inf\left\{ \widehat{\gamma}_c (\overline f): \gamma \in \mathcal{M}^1(X\times Y) \right\}=0,$ (see \cite[Proposition 5]{BaBl}). 
	
	\begin{lemma} \label{cherprop1} Let $\overline f\in \mathcal{F}$ be such that $\overline f \leq c$. Suppose that $\mathcal{T}_C(\overline f)\neq \emptyset$. Then, we have:
		\begin{eqnarray*}
			\mathcal{T}_{\mathcal{M}^1(X\times Y)}(\overline f) &=&  -\left \{\widehat{\gamma}_{|F}: \int_{X\times Y} c d\gamma =\int_{X\times Y} \overline f d\gamma; \hspace{1mm} \gamma \in \mathcal{M}^1(X\times Y) \right \}.
		\end{eqnarray*}
		Moreover, $0\not \in \mathcal{T}_{\mathcal{M}^1(X\times Y)}(\overline f)$.
	\end{lemma}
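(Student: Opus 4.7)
The plan is to reduce the closure and convex-hull operations in the definition of $\mathcal{T}_{\mathcal{M}^1(X\times Y)}(\overline f)$ to triviality by showing that each set inside the intersection is already convex and $w^*$-compact, and then to identify the intersection via a cluster-point argument inside the weakly compact convex set $\mathcal{M}^1(X\times Y)$ of Proposition \ref{propweak}.

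For each $n\geq 1$ set $K_n := \{\gamma\in \mathcal{M}^1(X\times Y) : 0 \leq \widehat{\gamma}_c(\overline f)\leq 1/n\}$. Since $\overline f \leq c$ and $\overline f \in C_b(X\times Y)$, the lower bound $\widehat{\gamma}_c(\overline f)\geq 0$ is automatic, so the only genuine constraint is $\widehat{\gamma}_c(\overline f)\leq 1/n$. The functional $\gamma \mapsto \widehat{\gamma}_c(\overline f) = \int c\,d\gamma - \int \overline f\,d\gamma$ is the sum of a weak-lower semicontinuous term (apply Proposition \ref{weaksci} to $c\in lsc_{bi}(X\times Y)$) and a weak-continuous one (since $\overline f \in C_b(X\times Y)$), hence weak-lower semicontinuous. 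Thus $K_n$ is convex and weak-closed in the weak-compact set $\mathcal{M}^1(X\times Y)$, therefore weak-compact. The linear map $\gamma\mapsto -\widehat{\gamma}_{|\mathcal{F}}$ is $(w,w^*)$-continuous, so $-\widehat{K_n} := \{-\widehat{\gamma}_{|\mathcal{F}}: \gamma\in K_n\}$ is convex and $w^*$-compact, which forces $\overline{\textnormal{conv}}^{w^*}(-\widehat{K_n}) = -\widehat{K_n}$. Consequently,
$$\mathcal{T}_{\mathcal{M}^1(X\times Y)}(\overline f) = \bigcap_{n\geq 1} (-\widehat{K_n}).$$

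The key step is to identify this intersection with $-\widehat{K_\infty}$, where $K_\infty:=\{\gamma\in\mathcal{M}^1(X\times Y) : \int c\,d\gamma = \int \overline f\,d\gamma\}$. The inclusion $\supseteq$ is immediate, as every $\gamma \in K_\infty$ lies in every $K_n$. For $\subseteq$, fix $\xi$ in the intersection and pick $\gamma_n \in K_n$ with $\xi = -\widehat{\gamma_n}_{|\mathcal{F}}$. Viewing $(\gamma_n)$ as a net in the weakly compact space $\mathcal{M}^1(X\times Y)$, extract a subnet $(\gamma_{h(\alpha)})_\alpha$ converging weakly to some $\gamma^* \in \mathcal{M}^1(X\times Y)$. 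For every $f \in \mathcal{F} \subseteq C_b(X\times Y)$ the evaluation $\gamma \mapsto \widehat{\gamma}(f)$ is weak-continuous, so $\xi(f) = -\widehat{\gamma_{h(\alpha)}}(f)\to -\widehat{\gamma^*}(f)$, giving $\xi = -\widehat{\gamma^*}_{|\mathcal{F}}$. The cofinality of $h$ forces $h(\alpha)\to\infty$, so $\widehat{\gamma_{h(\alpha)}}_c(\overline f) \leq 1/h(\alpha) \to 0$, and the weak-lower semicontinuity of $\gamma\mapsto \widehat{\gamma}_c(\overline f)$ yields $\widehat{\gamma^*}_c(\overline f) \leq 0$; together with the automatic non-negativity this shows $\gamma^* \in K_\infty$.

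For the last assertion, assume for contradiction that $0 \in \mathcal{T}_{\mathcal{M}^1(X\times Y)}(\overline f)$, so by the identification just proved $0 = -\widehat{\gamma^*}_{|\mathcal{F}}$ for some $\gamma^* \in \mathcal{M}^1(X\times Y)$. Testing against $f := 1_X \oplus 0 \in \mathcal{F}$, which equals the constant function $1$ on $X\times Y$, yields $0 = \widehat{\gamma^*}(1) = \gamma^*(X\times Y) = 1$, a contradiction. The only potential subtlety in the argument is the subnet bookkeeping in a non-metrizable weak topology, but this is handled by the standard cofinality property of subnets of sequences, which guarantees $1/h(\alpha) \to 0$ and lets the lower semicontinuity bite.
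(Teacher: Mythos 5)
Your proof is correct and follows essentially the same route as the paper: both reduce the closed convex hull to the underlying set via convexity, exploit the weak-compactness of $\mathcal{M}^1(X\times Y)$ from Proposition \ref{propweak} together with the weak-lower semicontinuity of $\gamma\mapsto\int_{X\times Y}c\,d\gamma$ to pass to a limit $\gamma^*$, and rule out $0$ by testing against the constant function $1\in\mathcal{F}$. If anything, your explicit subnet extraction from the sequence $(\gamma_n)_n$ in the identification of $\bigcap_n(-\widehat{K_n})$ with $-\widehat{K_\infty}$ fills in a step the paper leaves implicit after proving each level set is weak$^*$-closed.
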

	\begin{proof} It is easy to see, using the convexity argument and the definition of the set $\mathcal{T}_{\mathcal{M}^1(X\times Y)}(\overline f)$, that
		\begin{eqnarray*} 
			\mathcal{T}_{\mathcal{M}^1(X\times Y)}(\overline f) &\subseteq & -\bigcap_{n\geq 1} \overline{\left\{ \widehat{\gamma}_{|F}: 0\leq \widehat{\gamma}_c (\overline f)\leq \frac{1}{n}; \hspace{1mm} \gamma \in \mathcal{M}^1(X\times Y)) \right \}}^{w^*}.
		\end{eqnarray*}
		Set $A:=\left\{ \widehat{\gamma}_{|F}: 0\leq \widehat{\gamma}_c (\overline f) \leq \frac{1}{n}; \hspace{1mm} \gamma \in \mathcal{M}^1(X\times Y) \right \}$. We prove that $A$ is weak$^*$-closed in $\mathcal{F}^*$.  Let $\sigma \in \overline{A}^{w^*}$, then there exists a net $(\gamma_\alpha)_\alpha \subseteq \mathcal{M}^1(X\times Y)$, such that $0\leq \widehat{\gamma}_c (\overline f):=\int_{X\times Y} c d\gamma_\alpha - \int_{X\times Y} \overline fd\gamma_\alpha \leq \frac{1}{n}$ and $(\widehat{\gamma_\alpha}_{|F})$ weak$^*$ converges to $\sigma$. Since $\mathcal{M}^1(X\times Y) $ is weak$^*$-compact, there exists a subnet $(\gamma_\beta)_\beta$ weak$^*$ converging to some $\gamma^*\in \mathcal{M}^1(X\times Y) $. Thus, $((\widehat{\gamma}_\beta)_{|F})$ weak$^*$ converges to $\widehat{\gamma^*}_{|F}=\sigma$ in $\mathcal{F}^*$. Finally, we obtain that $\sigma =\widehat{\gamma^*}_{|F}$, $\widehat{\gamma^*}\in \mathcal{M}^1(X\times Y)$ and $0\leq \int_{X\times Y} c d\gamma^* - \int_{X\times Y} \overline f d\gamma^* \leq \frac{1}{n}$. In other words, $\sigma \in A$, so $A$ is weak$^*$-closed in $\mathcal{F}^*$. Hence, we obtain
		\begin{eqnarray*} 
			\mathcal{T}_{\mathcal{M}^1(X\times Y)}(\overline f) &\subseteq & -\bigcap_{n\geq 1} \left\{ \widehat{\gamma}_{|F}: 0\leq \int_{X\times Y} c d\gamma -\int_{X\times Y} \overline f d\gamma \leq \frac{1}{n}; \hspace{1mm} \gamma \in \mathcal{M}^1(X\times Y)) \right \}\\
			&=& - \left \{ \widehat{\gamma}_{|F}: \int_{X\times Y} c d\gamma =\int_{X\times Y} \overline f d\gamma; \hspace{1mm} \gamma \in \mathcal{M}^1(X\times Y)\right \}.
		\end{eqnarray*}
		The inverse inclusion is trivial. Finally, $0\not \in \mathcal{T}_{\mathcal{M}^1(X\times Y)}(\overline f)$ since $1\in \mathcal{F}$ and $\widehat{\gamma}(1)=1$ for every $\gamma \in \mathcal{M}^1(X\times Y)$.
	\end{proof}
	
	\begin{theorem} \label{thm2} Let $X$ and $Y$ be two completely regular Hausdorff spaces. Let $\mu\in \mathcal{P}_\sigma(X)$, $\nu \in \mathcal{P}_\sigma(Y)$ be two Baire probability measures and $c\in lsc_{bi}(X\times Y)$ such that $v_{\min}{\bf (OP)} <+\infty$. Then, the assertions $(i)$, $(ii)$ and $(iii)$ are equivalent. 
		
		$(i)$ ${\bf (DP)}$ has a solution.
		
		$(ii)$ There exists $(\phi^*,\psi^*, \gamma^*)\in C_b(X)\times C_b(Y) \times \Gamma(\mu, \nu)$ such that $\phi^*\oplus \psi^*\leq c$ and $$\int_{X\times Y} c d\gamma^*=\int_X \phi^* d\mu+ \int_Y \psi^* d\nu.$$ 
		
		$(iii)$ ${\bf (DP)}$ and ${\bf (OP)}$ have solutions and the duality $v_{\max}{\bf (DP)}=v_{\min}{\bf(OP)}$ holds. 
		
		Moreover, if the part $(ii)$ is satisfied, then $(\phi^*,\psi^*)$ is a solution of ${\bf (DP)}$ and $\gamma^*$ is a solution of ${\bf (OP)}$. 
	\end{theorem}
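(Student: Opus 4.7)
The plan is to prove the cycle $(iii) \Rightarrow (i) \Rightarrow (ii) \Rightarrow (iii)$, with the only substantive step being $(i) \Rightarrow (ii)$, handled through the Lagrange multiplier rule of \cite{BaBl}. The implication $(iii) \Rightarrow (i)$ is immediate. For $(iii) \Rightarrow (ii)$, given optimal $(\phi^*, \psi^*)$ and $\gamma^*$, the admissibility $\phi^* \oplus \psi^* \leq c$ together with $\gamma^* \in \Gamma(\mu, \nu)$ and the assumed duality produce
\begin{equation*}
\int_X \phi^*\, d\mu + \int_Y \psi^*\, d\nu = v_{\max}{\bf (DP)} = v_{\min}{\bf (OP)} = \int_{X\times Y} c\, d\gamma^*,
\end{equation*}
which is $(ii)$.

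For $(ii) \Rightarrow (iii)$, the weak inequality $v_{\max}{\bf (DP)} \leq v_{\min}{\bf (OP)}$ is always available: for any feasible $(\phi, \psi)$ and any $\gamma \in \Gamma(\mu, \nu)$, integrating $\phi \oplus \psi \leq c$ against $\gamma$ gives $\int_X \phi\, d\mu + \int_Y \psi\, d\nu \leq \int_{X\times Y} c\, d\gamma$. Under the hypothesis of $(ii)$ this sandwiches
\begin{equation*}
\int_X \phi^*\, d\mu + \int_Y \psi^*\, d\nu \leq v_{\max}{\bf (DP)} \leq v_{\min}{\bf (OP)} \leq \int_{X\times Y} c\, d\gamma^* = \int_X \phi^*\, d\mu + \int_Y \psi^*\, d\nu,
\end{equation*}
forcing equality throughout. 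Hence $(\phi^*, \psi^*)$ solves ${\bf (DP)}$, $\gamma^*$ solves ${\bf (OP)}$ (attainment of $v_{\min}{\bf (OP)}$ is supplied by Proposition~\ref{propOPS}), and the duality holds; this also settles the ``Moreover'' clause of the statement.

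The heart of the theorem is $(i) \Rightarrow (ii)$. Given a solution $(\phi^*, \psi^*)$ of ${\bf (DP)}$, put $\overline{f} := \phi^* \oplus \psi^* \in [\mathcal{M}^1(X\times Y)]^\times$, so that $\overline{f}$ maximizes the linear continuous functional $H$ on $[\mathcal{M}^1(X\times Y)]^\times$ in the reformulation ${\bf (\widetilde{DP})}$. The preliminary task is to verify the constraint qualification $\mathcal{T}_{\mathcal{M}^1(X\times Y)}(\overline{f}) \neq \emptyset$. Optimality forces $\inf_{(x,y)}(c-\overline{f})(x,y) = 0$: otherwise $\overline{f} + \varepsilon$ would remain admissible for a small $\varepsilon > 0$ and, since $H(1_{X\times Y}) = 1 > 0$, strictly improve $H$. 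Evaluating $\widehat{\gamma}_c(\overline{f})$ on Dirac charges $\widehat{\delta}_{(x_n, y_n)}$ along a minimizing sequence of $c - \overline{f}$ then yields $\inf\{\widehat{\gamma}_c(\overline{f}): \gamma \in \mathcal{M}^1(X\times Y)\} = 0$, and by \cite[Proposition 5]{BaBl} the normal-like cone is nonempty. The Lagrange multiplier rule of \cite{BaBl} applied to $H$ at $\overline{f}$ then produces a $\gamma^* \in \mathcal{M}^1(X\times Y)$ satisfying the slackness condition $\int_{X\times Y} c\, d\gamma^* = \int_{X\times Y} \overline{f}\, d\gamma^*$ (via the description in Lemma~\ref{cherprop1}) together with the stationarity $H = \widehat{\gamma^*}_{|\mathcal{F}}$; the normalization $H(1_{X\times Y}) = 1 = \widehat{\gamma^*}(1_{X\times Y})$ confirms the multiplier is indeed a unit charge. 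Evaluating stationarity on an arbitrary $\phi \oplus \psi \in \mathcal{F}$ reads $\int_X \phi\, d\mu + \int_Y \psi\, d\nu = \int_{X\times Y} \phi \oplus \psi\, d\gamma^*$ for all $(\phi,\psi) \in C_b(X) \times C_b(Y)$, which is exactly $\gamma^* \in \Gamma(\mu, \nu)$. Combining with slackness gives $\int_{X\times Y} c\, d\gamma^* = H(\overline{f}) = \int_X \phi^*\, d\mu + \int_Y \psi^*\, d\nu$, so $(ii)$ holds.

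The principal obstacle is the precise application of the Lagrange multiplier machinery of \cite{BaBl} to the infinite-constraint maximization ${\bf (\widetilde{DP})}$: besides the constraint qualification handled above via Dirac charges, one must carefully track the sign convention between $[\mathcal{M}^1(X\times Y)]^\times$ and its normal-like cone so that the produced multiplier is a genuine element of $\mathcal{M}^1(X\times Y)$ rather than a signed combination, and then recognise that stationarity of a functional acting as $\int_X \cdot\, d\mu + \int_Y \cdot\, d\nu$ on pure tensors is exactly the marginal condition defining $\Gamma(\mu, \nu)$.
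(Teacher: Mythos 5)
Your proposal is correct and follows essentially the same route as the paper: the one substantive implication $(i)\Rightarrow(ii)$ is handled, as in the paper, by passing to the reformulation ${\bf (\widetilde{DP})}$, invoking the Lagrange multiplier rule of \cite{BaBl} together with Lemma~\ref{cherprop1}, and reading off stationarity of the multiplier as the marginal condition defining $\Gamma(\mu,\nu)$, while $(ii)\Rightarrow(iii)$ is the same sandwich via weak duality. The only cosmetic difference is that you verify the constraint qualification by showing $\inf\{\widehat{\gamma}_c(\overline f):\gamma\in\mathcal{M}^1(X\times Y)\}=0$ directly with Dirac charges along a minimizing sequence, whereas the paper argues that $\overline f$ cannot lie in the interior of $[\mathcal{M}^1(X\times Y)]^\times$; both rest on the same \cite[Proposition 5]{BaBl}.
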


	\begin{proof} The part $(iii) \Longrightarrow (i)$ is trivial. The part $(ii) \Longrightarrow (iii)$ is clear since, $v_{max}{\bf (DP)}\geq \int_X \phi^* d\mu +\int_Y \psi^* d\nu =\int_{X\times Y} c d\gamma^* \geq v_{\min}{\bf(OP)} \geq v_{max}{\bf (DP)}$. Hence, $(\phi^*,\psi^*)$ is a solution of ${\bf (DP)}$ and $\gamma^*$ is a solution of ${\bf (OP)}$ and that the duality $v_{\max}{\bf (DP)}=v_{\min}{\bf(OP)}$ holds. We are going to prove the part $(i) \Longrightarrow (ii)$, by using the Lagrange multipliers rule in \cite{BaBl} and applied to ${\bf (DP)}$ seen as an optimisation problem with an infinite number of inequality constraints. Since, $(\phi^*,\psi^*)$ is a solution of ${\bf (DP)}$ then $\overline f:=\phi^* \oplus \psi^*$ is also a solution to the equivalent problem 
		\begin{equation*}
			{\bf (\widetilde{DP})}
			\left \{
			\begin{array}
				[c]{l}
				\max H(f)\\
				f\in [\mathcal{M}^1(X\times Y)]^\times.
			\end{array}
			\right. 
		\end{equation*}
		where the notations are those defined in the comments before Lemma \ref{cherprop1}, in particular $H(\phi\oplus \psi)=\int_X \phi d\mu +\int_Y \psi d\nu$ for all $\phi \in C_b(X)$, $\psi \in C_b(Y)$. 
		Notice that $\overline f$ cannot belong to the interior of $[\mathcal{M}^1(X\times Y)]^\times$, otherwise we would have $d_F H(\overline f)=H=0$, which is absurd. Thus, $\mathcal{T}_{\mathcal{M}^1(X\times Y)}(\overline f)\neq \emptyset$ (see \cite[Proposition 5]{BaBl}). According to \cite[Definition 1]{BaBl}, it is not difficult to see that $[\mathcal{M}^1(X\times Y)]^\times$ is a weak-admissible set. Thus, using the first-order necessary condition of optimality in \cite[Theorem 1]{BaBl}, there exists $\alpha\geq 0, \beta \geq 0$, $(\alpha, \beta)\neq (0,0)$ and a Lagrange multiplier $L\in \mathcal{T}_{\mathcal{M}^1(X\times Y)}(\overline f)$ satisfying: $ \alpha H+ \beta L=0$. By Lemma \ref{cherprop1}, there exists $\gamma^*\in \mathcal{M}^1(X\times Y)$ such that $L=-\widehat{\gamma^*}_{|F}$ and $\int_{X\times Y} c d\gamma^* =\int_{X\times Y} \overline f d\gamma^*$.  Since $0\not \in \mathcal{T}_{\mathcal{M}^1(X\times Y)}(\overline f)$, then $L\neq 0$ and so $\alpha \neq 0$ (otherwize $(\alpha, \beta)=(0,0)$ a contradiction). Dividing if necessary by $\alpha$, we can assume that $\alpha=1$. In other words, we have $\beta \widehat{\gamma^*}_{|F}=H$. Now, since $H(1_X\oplus 1_Y)=2$, $1_X\oplus 1_Y=2_{X\times Y}$ and $\widehat{\gamma^*}_{|F}(1_X\oplus 1_Y)=\widehat{\gamma^*}(2_{X\times Y})=2$, we get that $\beta=1$ (for a set $Z$, $1_Z$ and $2_Z$ denote the constant functions equal to $1$ and $2$ on $Z$, respectively). Thus, we have $\widehat{\gamma^*}_{|F}=H$ for some $\gamma^* \in \mathcal{M}^1(X\times Y)$ such that $\int_{X\times Y} c d\gamma^* =\int_{X\times Y} \overline f d\gamma^*$. This shows in particular that $\gamma^*\in \Gamma(\mu, \nu)$. Since  $\overline f=\phi^*\oplus \psi^*$, we have 
		
		\begin{eqnarray} \label{eqLO1}
			\int_{X\times Y} c d\gamma^* = \int_{X\times Y} \overline f d\gamma^*=\widehat{\gamma^*}_{|F}(\overline f) &=&H(\phi^*\oplus \psi^*)\nonumber\\
			&=&\int_X \phi^* d\mu +\int_Y \psi^* d\nu \nonumber\\
			&=& v_{\max}{\bf (DP)}.
		\end{eqnarray}
		Hence, $(\phi^*, \psi^*, \gamma^*)$ satisfies  $(ii)$. Thus, $(i) \Longrightarrow (ii)$ is proved. Finally, we see that $\gamma^*$ is necessarily a solution of ${\bf (OP)}$. Indeed, for each $\lambda \in \Gamma(\mu,\nu)$, we have 
		\begin{eqnarray*}
			\int_{X\times Y} c d\lambda &\geq& \int_{X\times Y} \phi^*\oplus \psi^* d\lambda \hspace{2mm} (\textnormal{since } c\geq \phi^*\oplus \psi^* \textnormal{ and } \lambda \textnormal{ is non-negative})\\
			&=& \int_X \phi^* d\mu+\int_Y \psi^* d\nu \hspace{2mm} (\textnormal{since } \lambda \in \Gamma(\mu,\nu))\\
			&=& v_{\max}{\bf (DP)} \hspace{2mm} (\textnormal{since } (\phi^*, \psi^*) \textnormal{ is a solution of the the dual problem} ) \\
			&=& \int_{X\times Y} c d\gamma^* \hspace{2mm} (\textnormal{by } (\ref{eqLO1})).
		\end{eqnarray*}
		This concludes the proof.
		
	\end{proof}		
	
	We now give the main consequence of our previous results, namely that the two problems {\bf (DP)} and {\bf (OP)} have solutions and satisfy duality as soon as the measures $\mu$ and $\nu$ are separable. This generalises the classical case of duality between {\bf (DP)} and {\bf (KP)}, but we will see this in the next section.
	
		\begin{corollary} \label{thm21} Let $X$ and $Y$ be two completely regular Hausdorff spaces. Let $c: X\times Y\to \R$ be a bounded function satisfying the property $(H)$ and $\mu\in \mathcal{P}_s(X)$, $\nu \in \mathcal{P}_s(Y)$ be two separable probability Baire measures. Then, ${\bf (OP)}$ has a solution $\gamma^*\in \Gamma(\mu,\nu)$ and ${\bf (DP)}$ has a solution of the form $(\phi^*,\psi^*):=(\xi^{c \bar c}_0,\xi^c_0)\in C_b(X)\times C_b(Y)$ (where $\xi_0 \in C_b(X)$) satisfying the duality $$\int_X \phi^* d\mu+\int_Y \psi^* d\nu=v_{\max}{\bf (DP)}=v_{\min}{\bf(OP)}=\int_{X\times Y} c d\gamma^*.$$ 
		
	\end{corollary}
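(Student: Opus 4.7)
The strategy is to combine the two main results from earlier in the paper—Corollary \ref{Dep} on existence for {\bf (DP)} and Theorem \ref{thm2} on the equivalence between solvability of {\bf (DP)} and duality with {\bf (OP)}. The plan is first to produce the dual solution of the prescribed form using Corollary \ref{Dep}, then to verify that the hypotheses of Theorem \ref{thm2} are met so that the primal solution and duality drop out immediately.

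First, I would check that the setting of Corollary \ref{Dep} applies. Since $\mu \in \mathcal{P}_s(X)$ and $\nu \in \mathcal{P}_s(Y)$ are in particular non-negative separable Baire measures, and $c$ is bounded with property $(H)$, the corollary (applied with the single-index family $I=\{1\}$, $\mu_1 = \mu$, $\nu_1 = \nu$) delivers a solution $(\phi^*,\psi^*) = (\xi_0^{c\bar c},\xi_0^c)$ of {\bf (DP)} with $\xi_0 \in C_b(X)$. This already provides the explicit form of the dual optimizer claimed in the statement.

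Next, I need to invoke Theorem \ref{thm2} to pass from solvability of {\bf (DP)} to solvability of {\bf (OP)} and the equality of values. For this I must verify the standing hypotheses: $c \in lsc_{bi}(X\times Y)$ and $v_{\min}{\bf (OP)} < +\infty$. The property $(H)$ immediately implies that $c$ is (jointly) continuous, and since $c$ is bounded it lies in $C_b(X\times Y) \subseteq lsc_{bi}(X\times Y)$. Moreover, since $\Gamma(\mu,\nu)$ is nonempty (as discussed after the definition of {\bf (OP)}, via the Hahn--Banach/Alexandroff construction) and $c \leq \|c\|_\infty$, any $\gamma \in \Gamma(\mu,\nu)$ satisfies $\int_{X\times Y} c \, d\gamma \leq \|c\|_\infty < +\infty$, so $v_{\min}{\bf (OP)} < +\infty$. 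Hence the assumptions of Theorem \ref{thm2} hold.

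Finally, applying Theorem \ref{thm2} $(i) \Longrightarrow (iii)$ with the dual solution $(\phi^*,\psi^*)$ just produced yields a $\gamma^* \in \Gamma(\mu,\nu)$ solving {\bf (OP)} together with the duality identity $v_{\max}{\bf (DP)} = v_{\min}{\bf (OP)}$. Chaining this with $\int_X \phi^*\, d\mu + \int_Y \psi^*\, d\nu = v_{\max}{\bf (DP)}$ (since $(\phi^*,\psi^*)$ is a dual optimizer) and $\int_{X\times Y} c \, d\gamma^* = v_{\min}{\bf (OP)}$ (since $\gamma^*$ is a primal optimizer) gives the full chain of equalities in the statement. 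There is no real obstacle here; the only thing to double-check is that the hypothesis $c \in lsc_{bi}(X\times Y)$ of Theorem \ref{thm2} is implied by property $(H)$ plus boundedness, which is immediate since $(H)$ forces continuity of $c$.
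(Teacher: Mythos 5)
Your proposal is correct and follows exactly the paper's own route: the paper's proof is the one-line citation of Corollary \ref{Dep} and Theorem \ref{thm2}, and your verification of the standing hypotheses (property $(H)$ plus boundedness giving $c \in C_b(X\times Y) \subseteq lsc_{bi}(X\times Y)$, and nonemptiness of $\Gamma(\mu,\nu)$ giving $v_{\min}{\bf (OP)} < +\infty$) simply makes explicit what the paper leaves implicit.
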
	
	\begin{proof}
		A consequence of Corollary \ref{Dep} and Theorem \ref{thm2}.
	\end{proof}
	
	\section{Monge-Kantorovich's classical duality as a special case.} \label{S4}
	
	The aim of this section is to show that the classical problem {\bf (KP)} is a special case of the problem {\bf (OP)}. Thus, the duality obtained in \cite[Theorem 1.39]{Sfilippo} and \cite[Theorem 5.10]{Villani} can easily be deduced from the results we have shown in the previous sections.
	
		\begin{lemma} \label{lemmaci}  Let $X$ and $Y$ be perfectly normal Hausdorff spaces. %Let $\mu\in \mathcal{P}_\sigma(X)$, $\nu \in \mathcal{P}_\sigma(Y)$ 
		Let $\mu$ and $\nu$	be two Borel probability measures on $X$ and $Y$ respectively and $\gamma \in \Gamma(\mu, \nu)$. Then, for every closed (resp. open) subset $C$ of $X$ and every closed (resp. open) subset $D$ of $Y$, we have $\gamma(C\times Y)=\mu(C)$ and $\gamma(X\times D)=\nu(D)$. If moreover, $\gamma$ is assumed to be a Borel probability measure, then $(\pi_X)_{\#} \gamma=\mu$ on $\mathcal{B}(X)$ and $(\pi_Y)_{\#} \gamma= \nu$ on $\mathcal{B}(Y)$.
	\end{lemma}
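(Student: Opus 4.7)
The plan is to reduce everything to the case of a closed subset $C \subset X$; the open case then follows by complementation. Because $X$ is perfectly normal Hausdorff, every closed subset is a zero set, so I pick $f \in C_b(X)$ with $f \geq 0$ and $C = f^{-1}(0)$. The function $(x,y) \mapsto f(x)$ lies in $C_b(X \times Y)$ and has zero set $C \times Y$, which places $C \times Y$ in $\mathcal{U}(X \times Y)$ and makes $\gamma(C \times Y)$ well-defined; its complement $(X \setminus C) \times Y$ is then a cozero set, also in $\mathcal{U}(X \times Y)$.

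Next I introduce the natural approximating pair $\phi_n := \max(1 - nf, 0)$ and $\psi_n := \min(nf, 1) = 1 - \phi_n$ in $C_b(X)$, with $\phi_n \downarrow 1_C$ and $\psi_n \uparrow 1_{X \setminus C}$ pointwise, and with $\phi_n \geq 1_C$, $\psi_n \leq 1_{X \setminus C}$ everywhere. Taking $\psi \equiv 0 \in C_b(Y)$ in the definition of $\Gamma(\mu, \nu)$ yields $\int_{X \times Y} \phi_n(x)\, d\gamma(x,y) = \int_X \phi_n\, d\mu$ and the analogous identity for $\psi_n$. On the $\mu$-side, dominated convergence (legitimate because $\mu$ is a genuine countably additive measure and the $\phi_n$, $\psi_n$ are uniformly bounded by $1$) gives $\int \phi_n\, d\mu \to \mu(C)$ and $\int \psi_n\, d\mu \to \mu(X \setminus C)$. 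On the $\gamma$-side, monotonicity of integration with respect to the non-negative charge $\gamma$ (which follows from uniform approximation of continuous bounded functions by simple $\mathcal{U}$-measurable functions) gives the fixed bounds $\int \phi_n\, d\gamma \geq \gamma(C \times Y)$ and $\int \psi_n\, d\gamma \leq \gamma((X \setminus C) \times Y)$.

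Passing to the limit in these inequalities produces $\mu(C) \geq \gamma(C \times Y)$ and $\mu(X \setminus C) \leq \gamma((X \setminus C) \times Y)$. The partition identity $1 = \mu(C) + \mu(X \setminus C) = \gamma(C \times Y) + \gamma((X \setminus C) \times Y)$, valid by finite additivity of $\gamma$ on $\mathcal{U}(X \times Y)$, forces both inequalities to be equalities. Thus $\gamma(C \times Y) = \mu(C)$ holds for every closed $C \subset X$, and by complementation also for every open $C$. The $Y$-side is handled symmetrically using the perfect normality of $Y$.

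For the final claim, when $\gamma$ is itself a Borel probability measure, $(\pi_X)_{\#} \gamma$ is a Borel probability measure on $X$ that agrees with $\mu$ on every closed subset of $X$ by the previous step. Since the closed subsets form a $\pi$-system generating $\mathcal{B}(X)$, Dynkin's $\pi$-$\lambda$ theorem forces $(\pi_X)_{\#} \gamma = \mu$ on all of $\mathcal{B}(X)$, and the argument for $(\pi_Y)_{\#} \gamma = \nu$ is identical. The main conceptual obstacle is the passage from pointwise inequalities between continuous functions and indicators of $\mathcal{U}$-sets to the corresponding integral inequalities with respect to the merely finitely additive charge $\gamma$; this is precisely what dictates the detour through the two monotone sequences $\phi_n$ and $\psi_n$, rather than attempting to invoke a monotone or dominated convergence theorem directly for $\gamma$.
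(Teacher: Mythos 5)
Your reduction to closed sets, the observation that $C\times Y$ is a zero-set of $X\times Y$ (so that $\gamma(C\times Y)$ makes sense), and the two bounds $\int\phi_n\circ\pi_X\,d\gamma\geq\gamma(C\times Y)$ and $\int\psi_n\circ\pi_X\,d\gamma\leq\gamma((X\setminus C)\times Y)$ are all correct, as is the $\pi$-system argument at the end. The gap is in the step where you claim the partition identity ``forces both inequalities to be equalities'': the two inequalities you derive are in fact \emph{the same} inequality, so the partition identity yields nothing. Concretely, set $\delta:=\mu(C)-\gamma(C\times Y)\geq 0$; since $\mu(C)+\mu(X\setminus C)=1=\gamma(C\times Y)+\gamma((X\setminus C)\times Y)$, one gets $\mu(X\setminus C)-\gamma((X\setminus C)\times Y)=-\delta\leq 0$, which is precisely your second inequality. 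Nothing in your argument forces $\delta=0$, so all you have actually proved is $\gamma(C\times Y)\leq\mu(C)$ for closed $C$. The reverse inequality cannot be reached by this kind of approximation: a continuous $\phi\leq\chi_C$ must be $\leq 0$ off the closed set $C$, hence gives no useful lower bound on $\gamma(C\times Y)$ when $C$ has empty interior, and a decreasing continuous approximation of $\chi_{X\setminus C}$ from above fails at boundary points of $C$.

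The missing ingredient is the \emph{regularity} of $\gamma$, which is built into the definition of $\mathcal{M}^1(X\times Y)$ and which your proof never uses; this is exactly where the substance of the lemma lies, since for a merely finitely additive, non-regular charge with the correct action on $C_b(X)\oplus C_b(Y)$ the conclusion can fail (mass can ``escape'' to points not in $C$). The paper closes the gap as follows: since $\gamma$ is inner regular it is also outer regular, so for $\varepsilon>0$ there is an open $V_\varepsilon\supseteq C$ with $\gamma(V_\varepsilon\times Y)\leq\gamma(C\times Y)+\varepsilon$; choosing a continuous $\phi_\varepsilon$ with $\chi_C\leq\phi_\varepsilon\leq\chi_{V_\varepsilon}$ and using the marginal identity gives
$\mu(C)\leq\int_X\phi_\varepsilon\,d\mu=\int_{X\times Y}\phi_\varepsilon\circ\pi_X\,d\gamma\leq\gamma(V_\varepsilon\times Y)\leq\gamma(C\times Y)+\varepsilon$,
whence $\mu(C)\leq\gamma(C\times Y)$. (For the inequality you did prove, the paper argues dually via outer regularity of $\mu$ rather than via your monotone sequence, but that half of your argument is sound.) You need to insert a regularity step of this kind; without it the proof does not establish the stated equalities.
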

	\begin{proof}  Recall that in perfectly normal Hausdorff spaces, Baire and Borel sets coincide and each closed set is a zero-set. Recall also that, every (inner) regular set function is also outer regular (\cite[Lemma 12.3, p. 435]{AB}) and every Borel measure on a perfectly normal space is	(inner) regular \cite[Corollary 7.1.9]{Bvi}. 
		We begin by showing that for every closed set $C$ of $X$, we have $\gamma(C\times Y)=\mu(C)$. Let $V$ be an arbitrary open set such that $C\subseteq V$. By the complete regularity of $X$, there exists a continuous function $\phi : X\to [0,1]$ satisfying $\phi(x)=1$ for each $x\in C$ and $\phi(x)=0$ for all $x\in X\setminus V$. From $\chi_C\leq \phi \leq \chi_{V}$, we have $\chi_{C\times Y}\leq \phi\circ \pi_X \leq \chi_{V\times Y}$ (where, $\chi_A$ denotes the characteristic function of a set $A$). It follows that $$\gamma(C\times Y)=\int_{X\times Y} \chi_{C\times Y}d\gamma \leq \int_{X\times Y} \phi\circ \pi_X d\gamma =\int_X \phi d\mu \leq \int_X \chi_{V} d\mu=\mu(V).$$
		Thus, from the fact that $\mu(C)=\inf\{\mu(V): V \textnormal {open and } C\subset V\}$ (since $\mu$ is also outer regular), we get $\gamma(C\times Y)\leq \mu(C)$. On the other hand, since $\gamma$ is (inner) regular set function, it is also outer regular. Thus, for every $\varepsilon >0$, there exists an open $V_\varepsilon$ of $X$ such that $C\subset V_\varepsilon$ and $\gamma(V_\varepsilon\times Y)\leq \gamma(C\times Y)+\varepsilon$. As above, there exists a continuous function $\phi_\varepsilon : X\to [0,1]$ such that $\chi_C\leq \phi_\varepsilon \leq \chi_{V_\varepsilon}$ and $\chi_{C\times Y}\leq \phi_\varepsilon\circ \pi_X \leq \chi_{V_\varepsilon \times Y}$. Then, we have: 
		\begin{eqnarray*}
		\mu(C)=\int_X \chi_{C}d\mu \leq \int_X \phi_\varepsilon d\mu =\int_{X\times Y} \phi_\varepsilon\circ \pi_X d\gamma &\leq& \int_{X\times Y} \chi_{V_\varepsilon\times Y} d\gamma\\
		&=&\gamma(V_\varepsilon\times Y)\\
		&\leq& \gamma(C\times Y)+\varepsilon.
		\end{eqnarray*}
		It follows that, $\mu(C) \leq \gamma(C\times Y)$, by taking the limit when $\varepsilon \to 0$. Hence, $\gamma(C\times Y)=\mu(C)$ for every closed set $C$ of $X$. Since $\gamma(X\times Y)=1$ and $\mu(X)=1$, we obtain that $\gamma((X\setminus C)\times Y)=\mu(X\setminus C)$ for every closed subset $C$ of $X$. Similarly, we obtain the analogous result for $\nu$. Finally, if moreover $\gamma \in \Gamma(\mu, \nu)$ is assumed to be a Borel probability on $X\times Y$, then using the above we see that the Borel measures $(\pi_X)_{\#} \gamma$ (resp. $(\pi_Y)_{\#} \gamma$) and $\mu$ (resp. $\nu$) coincides on open sets. Hence, $(\pi_X)_{\#} \gamma=\mu$ on $\mathcal{B}(X)$ and $(\pi_Y)_{\#} \gamma= \nu$ on $\mathcal{B}(Y)$, by \cite[Lemma 7.1.2]{Bvi}.
		
	\end{proof}
	\begin{theorem} \label{lemma3} Let $X$ and $Y$ be perfectly normal Hausdorff spaces. Suppose that $\mu\in \mathcal{P}_r(X)$ and $\nu \in \mathcal{P}_r(Y)$ are two Radon probability measures. Then, $\Gamma(\mu,\nu)=\Pi(\mu,\nu)$. In consequence, the problems ${\bf(OP)}$ and ${\bf(KP)}$ coincide. Thus, if $c\in lsc_{bi}(X\times Y)$ is such that $v_{\min}{\bf(KP)} <+\infty$, then ${\bf(KP)}$ has a solution.
	\end{theorem}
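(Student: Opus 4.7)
The plan is to prove the set equality $\Gamma(\mu,\nu)=\Pi(\mu,\nu)$ by separate inclusions and then deduce the existence of a solution to ${\bf(KP)}$ from Proposition~\ref{propOPS}.

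For the easy inclusion $\Pi(\mu,\nu)\subseteq \Gamma(\mu,\nu)$, I would take $\gamma\in\Pi(\mu,\nu)$ and restrict it to $\mathcal{U}(X\times Y)$; since Radon measures are automatically regular, this restriction lies in $\mathcal{M}^1(X\times Y)$. For every $\phi\in C_b(X)$ and $\psi\in C_b(Y)$, the change of variables formula applied to the pushforwards $(\pi_X)_\#\gamma=\mu$ and $(\pi_Y)_\#\gamma=\nu$ gives
$$\int_{X\times Y}\phi\oplus\psi\,d\gamma=\int_{X\times Y}\phi\circ\pi_X\,d\gamma+\int_{X\times Y}\psi\circ\pi_Y\,d\gamma=\int_X\phi\,d\mu+\int_Y\psi\,d\nu,$$
so $\gamma\in\Gamma(\mu,\nu)$.

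The main work is the reverse inclusion $\Gamma(\mu,\nu)\subseteq \Pi(\mu,\nu)$. Given $\gamma\in\Gamma(\mu,\nu)$, Lemma~\ref{lemmaci} already gives $\gamma(C\times Y)=\mu(C)$ and $\gamma(X\times D)=\nu(D)$ for every closed $C\subseteq X$ and $D\subseteq Y$ (note that by perfect normality $C\times Y$ is a zero-set of $X\times Y$, hence lies in $\mathcal{U}(X\times Y)$). I would then combine this with the tightness of $\mu$ and $\nu$ to show that $\gamma$ is tight on $\mathcal{U}(X\times Y)$: for $\varepsilon>0$, choose compact $K_X\subseteq X$, $K_Y\subseteq Y$ with $\mu(X\setminus K_X)<\varepsilon/2$ and $\nu(Y\setminus K_Y)<\varepsilon/2$; then $\gamma((X\setminus K_X)\times Y)+\gamma(X\times(Y\setminus K_Y))<\varepsilon$, and since $(K_X\times K_Y)^c=((X\setminus K_X)\times Y)\cup(X\times(Y\setminus K_Y))$, any $A\in\mathcal{U}(X\times Y)$ disjoint from the compact set $K_X\times K_Y$ has $\gamma(A)<\varepsilon$. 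Next I would invoke the standard extension theorem (see e.g.\ \cite[Chapter~7]{Bvi}): a tight, non-negative, finitely additive, regular set function on $\mathcal{U}(Z)$ over a completely regular Hausdorff space $Z$ extends uniquely to a Radon probability measure on $\mathcal{B}(Z)$, since tightness forces $\sigma$-additivity on zero-sets (a decreasing sequence of zero-sets going to $\emptyset$ eventually misses any fixed compact set by the finite intersection property), after which Carathéodory extension together with inner regularity and the identity $\mathcal{B}_a=\mathcal{B}$ in perfectly normal spaces produce the Radon measure. Calling the extension still $\gamma$, it is a Radon probability on $X\times Y$, and the second part of Lemma~\ref{lemmaci} gives $(\pi_X)_\#\gamma=\mu$ and $(\pi_Y)_\#\gamma=\nu$, so $\gamma\in\Pi(\mu,\nu)$.

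Once $\Gamma(\mu,\nu)=\Pi(\mu,\nu)$ is established, ${\bf(OP)}$ and ${\bf(KP)}$ share the same feasible set, integrand, and infimum. Since $c\in lsc_{bi}(X\times Y)$ and $v_{\min}{\bf(KP)}=v_{\min}{\bf(OP)}<+\infty$, Proposition~\ref{propOPS} yields a solution $\gamma^*\in\Gamma(\mu,\nu)=\Pi(\mu,\nu)$ of ${\bf(OP)}$, which is simultaneously a solution of ${\bf(KP)}$. The main obstacle is the extension step, namely upgrading the tight, finitely additive, regular set function $\gamma$ on $\mathcal{U}(X\times Y)$ into a genuine Radon Borel probability measure; this is classical but crucially relies on the conversion of tightness into $\sigma$-additivity on zero-sets, and then (via Carathéodory plus regularity) into Radonness.
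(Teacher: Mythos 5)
Your proposal is correct and follows essentially the same route as the paper: the key step in both is to use Lemma~\ref{lemmaci} together with compact rectangles $K_X\times K_Y$ to transfer the tightness of $\mu$ and $\nu$ to $\gamma$, and then to invoke a classical ``tight implies Radon'' principle. The only difference is cosmetic: the paper concludes via the functional characterization of Radon measures (\cite[Theorem 7.10.6]{Bvi}, testing $f\mapsto\int f\,d\gamma$ on functions vanishing on $K_\varepsilon\times L_\varepsilon$), whereas you pass through $\sigma$-additivity on zero-sets and the Carath\'eodory extension; both are standard and your sketch of that step is sound.
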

	\begin{proof} We will use the characterization of Radon measures given in \cite[Theorem 7.10.6]{Bvi}. Let $\gamma \in \Gamma(\mu, \nu)$. Fix $\varepsilon >0$ and find two compact sets $K_\varepsilon \subseteq X$ and $L_\varepsilon \subseteq Y$ such that $\mu(X\setminus K_\varepsilon)<\frac{\varepsilon}{2}$ and $\nu(Y\setminus L_\varepsilon)<\frac{\varepsilon}{2}$ (this is possible, since $\mu$ and $\nu$ are Radon probability measures by assumption). We have  $K_\varepsilon \times L_\varepsilon \in \mathcal{U}(X\times Y)$ and by using Lemma \ref{lemmaci},
		\begin{eqnarray*} \label{eqbargamma} \gamma((X\times Y)\setminus (K_\varepsilon \times L_\varepsilon)) &\leq& \gamma\left ( (X\setminus K_\varepsilon) \times Y\right)+\gamma\left ( X\times (Y\setminus L_\varepsilon) \right)\nonumber \\
			&=& \mu(X\setminus K_\varepsilon)+\nu(Y\setminus L_\varepsilon)\\
			&<& \varepsilon.
		\end{eqnarray*}
		Now, since the function $L: f \mapsto \int_{X\times Y} f d\gamma$ is linear continuous on $C_b(X\times Y)$ and since for every $f\in C_b(X\times Y)$ such that $f_{|K_\varepsilon \times L_\varepsilon}=0$, we have 
		\begin{eqnarray*}
			L(f):=\int_{X\times Y} f d \gamma&=&\int_{(X\times Y)\setminus (K_\varepsilon \times L_\varepsilon)} f d \gamma\\
			&\leq& \gamma((X\times Y)\setminus (K_\varepsilon \times L_\varepsilon))\|f\|_{\infty}\\
			&\leq& \varepsilon \|f\|_{\infty},
		\end{eqnarray*}
		we get, using \cite[Theorem 7.10.6]{Bvi}, that $\gamma$ is a Radon measure on $X\times Y$. Hence, $\gamma \in \Pi(\mu, \nu)$, that is, $\Gamma(\mu, \nu)\subseteq \Pi(\mu,\nu)$. The reverse inclusion is always true. In consequence, the problems ${\bf(OP)}$ and ${\bf(KP)}$ coincide and by Proposition \ref{propOPS} have also solutions.
	\end{proof}
	
	From our previous results, we obtain as an immediate consequence the classical Monge-Kantorovich's duality in perfectly normal Hausdorff spaces.
	
		\begin{corollary} \label{thm31} Let $X$ and $Y$ be two perfectly normal Hausdorff spaces. Let $c: X\times Y\to \R$ be a bounded function satisfying the property $(H)$ and $\mu\in \mathcal{P}_r(X)$, $\nu\in \mathcal{P}_r(Y)$ be two Radon probability measures. 
		Then, ${\bf (KP)}$ has a solution $\gamma^*\in \Pi(\mu,\nu)$ and ${\bf (DP)}$ has a solution of the form $(\phi^*,\psi^*):=(\xi^{c \bar c}_0,\xi^c_0)\in C_b(X)\times C_b(Y)$ (where $\xi_0 \in C_b(X)$) satisfying the duality $$\int_X \phi^* d\mu+\int_Y \psi^* d\nu=v_{\max}{\bf (DP)}=v_{\min}{\bf(KP)}=\int_{X\times Y} c d\gamma^*.$$ 
		In particular, $\phi^*\oplus\psi^* =c$, $\gamma^*$-a.e.
	\end{corollary}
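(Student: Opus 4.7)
The plan is to assemble the conclusion directly from the three pillars already established: Corollary~\ref{thm21} (solutions and duality for $(OP)$ versus $(DP)$ under separability), Theorem~\ref{lemma3} (identification of $\Gamma(\mu,\nu)$ with $\Pi(\mu,\nu)$ in the Radon/perfectly normal setting), and a trivial pointwise argument for the a.e.\ equality on the support.

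First, I would observe that every Radon probability measure on a Hausdorff space is tight, hence separable: by the chain $\mathcal{M}_t(X)\subseteq \mathcal{M}_\tau(X)\subseteq \mathcal{M}_s(X)$ recalled in Section~\ref{S1}, one has $\mu\in\mathcal{P}_s(X)$ and $\nu\in\mathcal{P}_s(Y)$. Since the cost $c$ is bounded and satisfies property $(H)$, all hypotheses of Corollary~\ref{thm21} are met, so $(OP)$ admits a solution $\gamma^*\in\Gamma(\mu,\nu)$, $(DP)$ admits a solution of the form $(\phi^*,\psi^*)=(\xi_0^{c\bar c},\xi_0^c)$ with $\xi_0\in C_b(X)$, and the duality identity
\[
\int_X \phi^* d\mu+\int_Y \psi^* d\nu=v_{\max}{\bf (DP)}=v_{\min}{\bf(OP)}=\int_{X\times Y} c\,d\gamma^*
\]
holds.

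Next, Theorem~\ref{lemma3} applies because $X$ and $Y$ are perfectly normal Hausdorff and $\mu,\nu$ are Radon, yielding $\Gamma(\mu,\nu)=\Pi(\mu,\nu)$ and hence the equivalence of $(OP)$ with the classical problem $(KP)$. In particular $\gamma^*\in\Pi(\mu,\nu)$ is a solution of $(KP)$, and $v_{\min}{\bf (OP)}=v_{\min}{\bf (KP)}$, so the chain of equalities above becomes exactly the duality statement claimed.

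Finally, for the a.e.\ equality: because $(\phi^*,\psi^*)$ is admissible for $(DP)$, one has $\phi^*\oplus\psi^*\le c$ pointwise, so $c-\phi^*\oplus\psi^*$ is a non-negative Borel function on $X\times Y$. The duality identity forces
\[
\int_{X\times Y}(c-\phi^*\oplus\psi^*)\,d\gamma^*=0,
\]
and since $\gamma^*$ is now a genuine (Radon) Borel probability measure, this integral of a non-negative measurable function vanishes only if the integrand is zero $\gamma^*$-a.e., giving $\phi^*\oplus\psi^*=c$ $\gamma^*$-a.e. There is no serious obstacle here; the only point that requires a moment of care is justifying $\mathcal{P}_r\subseteq \mathcal{P}_s$ so that Corollary~\ref{thm21} genuinely applies, and then invoking Theorem~\ref{lemma3} to transfer the conclusion from $(OP)$ to $(KP)$.
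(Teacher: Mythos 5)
Your proposal is correct and follows essentially the same route as the paper: assemble the first part from Corollary~\ref{thm21} and Theorem~\ref{lemma3}, then deduce the $\gamma^*$-a.e.\ equality from $\int_{X\times Y}(c-\phi^*\oplus\psi^*)\,d\gamma^*=0$ with a non-negative integrand, using that $\gamma^*\in\Pi(\mu,\nu)$ is a genuine Radon measure. Your explicit justification that Radon implies tight implies separable (so that Corollary~\ref{thm21} applies) is a detail the paper leaves implicit, but it is the intended reasoning.
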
	
	\begin{proof}
		The first part is a direct consequence of Corollary \ref{thm21} and Theorem \ref{lemma3}. The fact that $\phi^*\oplus\psi^* =c$, $\gamma^*$-a.e, is a consequence of the formula $\int_X \phi^* d\mu+\int_Y \psi^* d\nu=\int_{X\times Y} c d\gamma^*$. Indeed, since $\gamma^*\in \Pi(\mu,\nu)$, then $\gamma^*$ is a Radon probability measure satisfying the equality $\int_X \phi^* d\mu+\int_Y \psi^* d\nu=\int_{X\times Y} \phi^*\oplus \psi^* d\gamma^*$. Then, we get that $\int_{X\times Y} \left(c -\phi^*\oplus \psi^*\right) d\gamma^* =0$. Since, $c -\phi^*\oplus \psi^* \geq 0$, we obtain that $\phi^*\oplus\psi^* =c$, $\gamma^*$-a.e.
	\end{proof}
	
\begin{remark} As mentioned in the introduction, our approach in this article did not use the concept of $c$-cyclic monotonicity. However, the fact that a solution of the problem {\bf (KP)} is concentrated on a $c$-cyclically monotone set can be trivially deduced from Corollary \ref{thm31}. Recall that a set $\Lambda \subseteq X\times Y$ is said to be  $c$-cyclically monotone if for all $k \in \N$ and $\{(x_i, y_i)\}_{i=1}^k \subseteq \Lambda$ and for any permutation $\sigma \in \mathcal{S}_k$ 
$$\sum_{i=1}^k c(x_i, y_i) \leq \sum_{i=1}^k c(x_i, y_{\sigma(i)}).$$
For valuable details on the concept of $c$-cyclic monotonicity, we refer to	\cite{DKW} and the references therein.

Let $X$ and $Y$ be two perfectly normal Hausdorff spaces. Let $c: X\times Y\to \R$ be a bounded function satisfying the property $(H)$ and $\mu\in \mathcal{P}_r(X)$, $\nu\in \mathcal{P}_r(Y)$ be two Radon probability measures. Suppose that $\gamma \in \Pi(\mu, \nu)$ is a solution of ${\bf (KP)}$. Then, $\gamma$ is concentrated on a set $\Lambda\subseteq X \times Y$ which is $c$-cyclically monotone. Indeed, by Corollary \ref{thm31}, there exists $(\phi^*,\psi^*):=(\xi^{c \bar c}_0,\xi^c_0)\in C_b(X)\times C_b(Y)$ (where $\xi_0 \in C_b(X)$) such that $\phi^*\oplus\psi^* =c$, $\gamma$-a.e. Thus, there exists a measurable set $\Lambda\subseteq X \times Y$ such that $\gamma(\Lambda)=1$ and $\Lambda \subseteq \left\{(x,y)\in X \times Y: \phi^*(x)+\psi^*(y)=c(x,y)\right\}$. This implies that $\Lambda$ is $c$-cyclically monotone.
\end{remark}

	If we are only interested in weak duality, i.e. the equality of $v_{\max}{\bf (DP)}=v_{\min}{\bf(KP)}$ without worrying about the existence of a solution for ${\bf (DP)}$, Corollary \ref{thm31} can be extended to more general condition on the cost $c$. In a metric space, this would mean assuming that $c$ is bounded from below and lower semicontinuous. 
	%%%%%%%%%%%%%%%%%%%%%%%%%%%%%%%%
	%%%%%%%%%%%%%%%%%%%%%%%%%%%%%%%%%
	%%%%%%%%%%%%%%%%%%%%%%%%%%%%%%%%%%%%
	%%%%%%%%%%%%%%%%%%%%%%%%%%%%%%
	Recall that if $X$ and $Y$ are metric spaces, every non-negative proper lower semicontinuous $c: X\times Y\to \R\cup\{+\infty\}$ can be writen $$c(x,y)=\lim_{n\to+\infty} c_n(x,y),$$ 
	where, $(c_n)_{n\in \N}$ is a nondecreasing sequence of bounded functions having the property $(H)$. To see this, just choose
	$$c_n(x,y)=\inf_{(z,t)\in X\times Y} \left\{\min\left(c(z,t),n\right) +n\left[ d(x,z)+d(y,t)\right]\right\}.$$
	In fact, $c_n$ is $n$-Lipschitz ensuring the property $(H)$, is also nondecreasing in $n$, and further satisfies $0\leq c_n\leq \min\left(c(x,y),n\right)$ for each $n\in \N$ and each $(x,y)\in X\times Y$. The condition on $c$ in the following result is a natural extension of the property we have just described to perfectly normal Hausdorff spaces.
	
	The result that follows is well known (see \cite[Theorem 5.10]{Villani} in Polish spaces and \cite{Eda, Khg} in completely regular spaces), but we present it here with an alternative proof that builds on the previous results.
	
	\begin{theorem} \label{thm111} Let $X$ and $Y$ be two perfectly normal Hausdorff spaces, $\mu\in \mathcal{P}_r(X)$ and $\nu\in \mathcal{P}_r(Y)$ be two Radon probability measures. Let $c\in lsc_{bi}(X\times Y)$ be such that $v_{\min}{\bf (KP)} <+\infty$. Suppose that $c(x,y)=\lim_{n\to+\infty} c_n(x,y)$, where $(c_n)_{n\in \N}$ is a nondecreasing sequence of bounded  non-negative functions having the property $(H)$. Then, the duality $v_{\max}{\bf (DP)}=v_{\min}{\bf (KP)}$ holds.
	\end{theorem}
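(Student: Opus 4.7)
The plan is to approximate the cost $c$ from below by the bounded continuous costs $c_n$, apply Corollary \ref{thm31} at each level, and pass to the limit $n\to+\infty$. For each $n\in\N$ denote by $a_n$ and $b_n$ the values of ${\bf (KP)}$ and ${\bf (DP)}$ respectively, computed with the cost $c$ replaced by $c_n$. Since every $c_n$ is bounded, non-negative and satisfies property $(H)$, Corollary \ref{thm31} gives the strong duality $a_n=b_n$ and produces a primal optimizer $\gamma_n\in\Pi(\mu,\nu)$ realising $a_n=\int_{X\times Y}c_n\,d\gamma_n$. Note that $(a_n)$ is non-decreasing since $c_n\le c_{n+1}$.

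The easy direction uses only the pointwise inequality $c_n\le c$: every admissible dual pair for the cost $c_n$ is admissible for the cost $c$, hence $b_n\le v_{\max}{\bf (DP)}$. Combining with the elementary weak duality $v_{\max}{\bf (DP)}\le v_{\min}{\bf (KP)}$ yields
\[
\sup_n a_n=\sup_n b_n\le v_{\max}{\bf (DP)}\le v_{\min}{\bf (KP)}.
\]
It therefore suffices to prove the reverse inequality $v_{\min}{\bf (KP)}\le\sup_n a_n$.

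For this, exploit the weak compactness of $\Pi(\mu,\nu)$, which follows from Theorem \ref{lemma3} (identifying $\Pi(\mu,\nu)$ with $\Gamma(\mu,\nu)$) combined with Proposition \ref{propweak}. Thus some subnet $(\gamma_{n_\alpha})_{\alpha\in A}$ of $(\gamma_n)$ converges weakly to some $\gamma^*\in\Pi(\mu,\nu)$. Fix $m\in\N$. For all $\alpha$ with $n_\alpha\ge m$ (which holds eventually) one has $c_m\le c_{n_\alpha}$ and so $\int c_m\,d\gamma_{n_\alpha}\le\int c_{n_\alpha}\,d\gamma_{n_\alpha}=a_{n_\alpha}\le\sup_k a_k$. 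Since $c_m$ is bounded continuous, weak convergence gives $\int c_m\,d\gamma^*=\lim_\alpha\int c_m\,d\gamma_{n_\alpha}\le\sup_k a_k$. Finally, $\gamma^*$ being Radon (hence $\sigma$-additive) and $c_m\nearrow c$ pointwise with $c_m\ge0$, the monotone convergence theorem yields $\int c\,d\gamma^*=\sup_m\int c_m\,d\gamma^*\le\sup_k a_k$, whence $v_{\min}{\bf (KP)}\le\int c\,d\gamma^*\le\sup_k a_k$, as desired.

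The main obstacle I anticipate is this last step: one must both secure that the subnet limit $\gamma^*$ lies in $\Pi(\mu,\nu)$ (so that $v_{\min}{\bf (KP)}\le\int c\,d\gamma^*$) and that it is genuinely a Radon measure (so that the monotone convergence theorem is available). Both rely on the identification $\Pi(\mu,\nu)=\Gamma(\mu,\nu)$ from Theorem \ref{lemma3}, without which the weak limit would only live in the larger space $\mathcal{M}^1(X\times Y)$ of regular finitely additive charges, where the monotone convergence theorem does not apply.
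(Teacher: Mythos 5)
Your proof is correct and follows the same overall strategy as the paper: truncate/regularize the cost to the bounded sequence $c_n$ with property $(H)$, invoke Corollary \ref{thm31} to get strong duality $v_{\min}{\bf (KP)_n}=v_{\max}{\bf (DP)_n}$ at each level, observe that the dual admissible set for $c_n$ is contained in that for $c$ so that $\sup_n v_{\max}{\bf (DP)_n}\leq v_{\max}{\bf (DP)}\leq v_{\min}{\bf (KP)}$, and then reduce everything to the interchange $\lim_n\min_{\gamma\in\Pi(\mu,\nu)}\int c_n\,d\gamma=\min_{\gamma\in\Pi(\mu,\nu)}\lim_n\int c_n\,d\gamma$. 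The only point where you diverge is how that interchange is justified: the paper cites Terkelsen's minimax theorem \cite[Corollary p.~407]{Terk} for a nondecreasing sequence of weak-continuous functions on the weak-compact set $\Pi(\mu,\nu)$, whereas you prove it directly by extracting a weakly convergent subnet of the optimizers $\gamma_n$, using that each fixed $c_m$ is bounded continuous to pass to the limit along the subnet, and then applying monotone convergence to the limit plan $\gamma^*$. Your version is self-contained and correctly identifies the two places where Theorem \ref{lemma3} is indispensable (so that $\gamma^*$ stays in $\Pi(\mu,\nu)$ and is $\sigma$-additive, making monotone convergence legitimate); it costs a subnet argument but removes an external reference, and both routes use the same compactness and the same monotone convergence step at the end.
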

	
	\begin{proof}  For all $n\in \N$, define  $$F(c_n):=\{(\phi,\psi)\in C_b(X)\times C_b(Y): \phi(x)+\psi(y)\leq c_n(x,y); \forall (x,y)\in X\times Y\},$$ 
		and let us set
		$$F(c):=\{(\phi,\psi)\in C_b(X)\times C_b(Y): \phi(x)+\psi(y)\leq c(x,y); \forall (x,y)\in X\times Y\},$$
		the admissible set for ${\bf (DP)}$. Clearly, $F (c_n)\subset F (c)$ for every $n\in \N$ and we have,
		\begin{eqnarray*} \label{DPn}
			v_{\max}{\bf (DP)_n}&:=&\sup \left \{ \int_X \phi d\mu+\int_Y \psi d\nu: (\phi, \psi)\in F (c_n) \right\} \leq v_{\max}{\bf (DP)}.
		\end{eqnarray*}	
		\begin{eqnarray*} 
			v_{\min}{\bf (KP)_n}&:=&\inf\left \{\int_{X\times Y} c_n d\gamma: \gamma \in \Pi(\mu,\nu)\right \}\leq v_{\min}{\bf (KP)}.
		\end{eqnarray*} 
		By Corollary \ref{thm31} applied with the cost $c_n$ which is a bounded function having the propert $(H)$, both the problems ${\bf (KP)_n}$ and ${\bf (DP)_n}$ have solutions satisying the following duality: 
		\begin{eqnarray*} \label{Eqfin}
			v_{\min}{\bf(KP)_n}=\int_{X\times Y} c_n d\gamma_n =\int_X \phi^*_n d\mu+\int_Y \psi^*_n d\nu=v_{\max}{\bf (DP)_n},
		\end{eqnarray*}
		where $\gamma_n$ is a solution of ${\bf (KP)_n}$ and $(\phi^*_n, \psi^*_n)$ is a solution of ${\bf (DP)_n}$. Then, we have, 
		\begin{eqnarray} \label{eqA}
			v_{\min}{\bf (KP)_n}:=\min_{\gamma \in \Pi(\mu, \nu)}\int_{X\times Y} c_n d\gamma=v_{\max}{\bf (DP)_n}\leq v_{\max}{\bf (DP)}\leq v_{\min}{\bf (KP)}.
		\end{eqnarray}
		To concludes that the duality holds, it suffices to prove that $\lim_{n\to +\infty}v_{\min}{\bf(KP)_n}=v_{\min}{\bf(KP)}$.	The set $\Pi(\mu, \nu)$ is weak-compact (see Theorem ~\ref{lemma3} and Proposition ~\ref{propOPS}) and for each $n\in \N$, the function $\widehat{c}_n: \gamma \mapsto \int_{X\times Y} c_n d\gamma$ is weak-continuous on $\Pi(\mu, \nu)$ with $\widehat{c}_n(\gamma)\leq \widehat{c}_{n+1}(\gamma)$ for $n\in \N$ and each $\gamma \in \Pi(\mu, \nu)$. Using the limit and minimum interversion result in \cite[Corollary p. 407]{Terk}, we have 
		\begin{eqnarray} \label{eqeq5}
			\min_{\gamma \in \Pi(\mu, \nu)} \lim_{n\to +\infty} \int_{X\times Y} c_n d\gamma =\lim_{n\to +\infty} \min_{\gamma \in \Pi(\mu, \nu)} \int_{X\times Y} c_n d\gamma.
		\end{eqnarray}
		By the monotone convergence theorem, we have $\lim_{n\to +\infty} \int_{X\times Y} c_n d\gamma =\int_{X\times Y} c d\gamma$, for each $\gamma \in \Pi(\mu, \nu)$. Using $(\ref{eqeq5})$, we get $v_{\min}{\bf (KP)}=\lim_{n\to +\infty}v_{\min}{\bf(KP)_n}$. Finally, from $(\ref{eqA})$, we obtain the duality $v_{\min}{\bf (KP)}=v_{\max}{\bf (DP)}$.
	\end{proof}
	
	\paragraph{\bf An example in finite dimension under the Solgenfrey topology.} 
	In finite dimension, we can relax the conditions on the cost function $c$ by assuming that it only satisfies the property $(H)$ for a topology finer than the usual topology. In Example \ref{ex2} below, we will restrict ourselves to the Euclidean spaces $\R^n$ and $\R^p$, $n, p \geq 1$ and assume that $c$ satisfies the property $(H)$ when $\R^n \times \R^p$ is provided with the Sorgenfrey product topology which is weaker than assuming the property $(H)$ for the usual product topology on $\R^n \times \R^p$. This makes it possible to extend cost functions to a class of functions that are not necessarily even lower semicontinuous for the usual topology. In this case we can guarantee both duality and the existence of solutions for ${\bf (DP)}$ in a space containing $C_b(\R^n)\times C_b(\R^p)$ but strictly included in $\mathcal{L}^1(\mu)\times \mathcal{L}^1(\nu)$.
	
	Recall that the Sorgenfrey $n$-space is defined as the space $\R^n$ with the finer topology denotes $\tau_n$ whose base consists of all rectangles $\prod_{i=1}^n [a_i, b_i)$, where $a_i< b_i$, $i=1,...,n$ are real numbers. We denote $\R^n_\ell$ the space $\R^n$ equipped with this topology. It is well-known that this topology generates the same Borel $\sigma$-algebra as the Euclidean topology of $\R^n$, that is $\mathcal{B}(\R^n_\ell)=\mathcal{B}(\R^n)$. It is also well-known that $\R^n_\ell$ is a separable completely regular, but neither normal nor metrizable space. It follows that every Borel (equivalently, Radon) probability measure on $\R^n$ is a separable Borel probability measure on $\R^n_\ell$ (see \cite[Example 6.1.19 \& Example 7.2.4]{Bvi}), but is not Radon in general on $\R^n_\ell$ since compact subsets of $\R^n_\ell$ are at most countable subsets. In fact we have $\mathcal{P}_\sigma (\R^n)=\mathcal{P}_r (\R^n)=\mathcal{P}_s (\R^n_\ell)$. It is easy to see that any bounded function $c: \R^n \times \R^p \to \R$ satisfying the property $(H)$ on $\R^n \times \R^p$ will always satisfy $(H)$ on $\R^n_\ell\times \R^p_\ell$. However, the converse is not always true. A trivial example in $\R\times \R$ (here $n=p=1$), is to take $c(x,y)=\chi_{\R^+}(x)+\chi_{\R^+}(y)$, for all $(x,y)\in \R\times \R$ , where $\chi_{\R^+}$ denotes the characteristic function of $\R^+$. It is thus easy to see that $c$ is not lower semicontinuous for the usual topology of $\R\times \R$, however it is continuous and satisfies the property $(H)$ on $\R_\ell\times \R_\ell$.
	
	\begin{example} \label{ex2} Let $\mu\in \mathcal{P}_r(\R^n)$ and $\nu \in \mathcal{P}_r(\R^p)$ be two Radon probability measures. Suppose that $c: \R^n_\ell\times \R^p_\ell\to \R$ be a bounded function satisfying the property $(H)$. Then, ${\bf (KP)}$ has a solution $\gamma^* \in \Pi(\mu,\nu)$ and ${\bf (DP)}$ under the admissible set $\left\{(\phi,\psi)\in C_b(\R^n_\ell)\times C_b(\R^p_\ell): \phi\oplus \psi \leq c\right\}$, has a solution of the form $(\phi^*,\psi^*):=(\xi^{c \bar c}_0,\xi^c_0)\in C_b(\R^n_\ell)\times C_b(\R^p_\ell)$ (where $\xi_0 \in C_b(\R^n_\ell)$). Moreover the duality $v_{\max}{\bf (DP)}=v_{\min}{\bf(KP)}$, holds.	
	\end{example}
	
	\begin{proof}
		By the above remarks, $\mu\in \mathcal{P}_s (\R^n_\ell)$ and $\nu\in \mathcal{P}_s (\R^p_\ell)$ and by assumption $c: \R^n_\ell\times \R^p_\ell\to \R$ is a bounded function satisfying the property $(H)$. Using Corollary \ref{thm21}, the problem ${\bf (OP)}$ has a solution $\gamma^*\in \Gamma(\mu,\nu)$, ${\bf (DP)}$ has a solution of the form $(\phi^*,\psi^*):=(\xi^{c \bar c}_0,\xi^c_0)\in C_b(\R^n_\ell)\times C_b(\R^p_\ell)$ (where $\xi_0 \in C_b(\R^n_\ell)$). Moreover the duality $v_{\max}{\bf (DP)}=v_{\min}{\bf(OP)}$ holds. Now, we prove that the set $\Gamma(\mu,\nu)$ defined on $\R^n_\ell \times \R^p_\ell$ is a subset of the transport plan $\Pi(\mu, \nu)$ defined on $\R^n \times \R^p$. Indeed, let $\gamma \in \Gamma(\mu, \nu)$, then $\gamma \in \mathcal{M}^1(\R^n_\ell\times \R^p_\ell)$ and $\int_{\R^n\times \R^p} \phi\oplus \psi d\gamma =\int_{\R^n} \phi d\mu+\int_{\R^p} \psi d\nu$, for every $\phi \in C_b(\R^n_\ell)$ and $\psi \in C_b(\R^p_\ell)$. Since the Sorgenfrey topology is finer than the usual topology, we have $\mathcal{M}^1(\R^n_\ell\times \R^p_\ell)\subseteq \mathcal{M}^1(\R^n\times \R^p)$, $C_b(\R^n)\subseteq C_b(\R^n_\ell)$ and $C_b(\R^p)\subseteq C_b(\R^p_\ell)$. It follows that $\gamma \in \mathcal{M}^1(\R^n\times \R^p)$ and $\int_{\R^n\times \R^p} \phi\oplus \psi d\gamma =\int_{\R^n} \phi d\mu+\int_{\R^p} \psi d\nu$, for every $\phi \in C_b(\R^n)$ and $\psi \in C_b(\R^p)$. Since $\mu$ and $\nu$ are Radon on $\R^n$ and $\R^p$ equipped with there usual topologies, using Theorem \ref{lemma3}, we get that $\gamma$ is a Radon probability measure on $\R^n \times \R^p$ for the usual product topology. Hence, $\gamma \in \Pi(\mu, \nu)$ and so $\Gamma(\mu, \nu)\subseteq \Pi(\mu, \nu)$. It follows that $v_{\min}{\bf(KP)}\leq v_{\min}{\bf(OP)}$. On the other hand, we know that $v_{\min}{\bf(DP)}\leq v_{\min}{\bf(KP)}$ and $v_{\max}{\bf (DP)}=v_{\min}{\bf(OP)}=\int_{\R^n\times \R^p} c d\gamma^*$. Thus, $v_{\min}{\bf(DP)}= v_{\min}{\bf(KP)}=\int_{\R^n\times \R^p} c d\gamma^*$. That is $\gamma^*$ is also a solution of ${\bf(KP)}$ and that the duality holds.
	\end{proof}

	\section*{Acknowledgement}
	
	This research has been conducted within the FP2M federation (CNRS FR 2036) and  SAMM Laboratory of the University Paris Panthéon-Sorbonne.
	
	\section*{Declaration} 
	
	- The authors declare that there is no conflict of interest.
	
	- Data sharing not applicable to this article as no datasets were generated or analysed during the current study.
	
	\bibliographystyle{amsplain}
			
	\end{document}